\newtheorem{theorem}{Theorem}[section]
\newtheorem{corollary}[theorem]{Corollary}
\theoremstyle{definition}
\newtheorem{definition}[theorem]{Definition}
\newtheorem{remark}[theorem]{Remark}
\begin{document}

\newcommand{\V}{\boldsymbol{V}} 
\newcommand{\s}{\mathcal{S}}
\newcommand{\e}{\mathrm{e}^}
\newcommand{\IA}{\mathcal{I_A}}
\newcommand{\II}{\mathcal{I_I}}
\newcommand{\EI}{\mathcal{E_I}}
\newcommand{\EA}{\mathcal{E_A}}
\newcommand{\T}{\mathcal{T}}
\newcommand{\p}{\mathcal{P}}
\newcommand{\F}{\mathcal{F}}
\newcommand{\C}{\mathcal{C}}

\title{On the Potts model partition function in an external field
}

\author[L.~McDonald]{ Leslie M. McDonald}
\address{Department of Mathematics and Statistics,  University of South Alabama, Mobile, AL 36688, USA}
\curraddr{Department of Mathematics,
Oregon State University
Corvallis, OR
97331, USA}
 \email{mcdonles@math.oregonstate.edu}   

\author[I.~Moffatt]{ Iain Moffatt}
\address{Department of Mathematics and Statistics,  University of South Alabama, Mobile, AL 36688, USA}
\email{imoffatt@jaguar1.usouthal.edu}

\begin{abstract}
We study the partition function of Potts model in an external (magnetic) field, and its connections with the zero-field Potts model partition function. Using a deletion-contraction formulation for the partition function $Z$ for this model, we  show that   it can be expanded  in terms of the zero-field partition function.  We also show that $Z$ can be written as a sum over the spanning trees, and the spanning forests, of a graph $G$. Our results  extend to $Z$ the well-known spanning tree expansion for the zero-field partition function that arises though its connections with the Tutte polynomial. 
\end{abstract}

\keywords{Tutte polynomial\and Potts model \and spanning trees \and $\V$-polynomial \and external field \and Hamiltonian\and edge activities\and statistical mechanics }
\subjclass{05C31,   05C22,  82B20}
\date{\today}

\maketitle

\section{Introduction}
Let $G$ be a lattice or, more generally, a graph. The $q$-state Potts model partition function at temperature $T$ in an external field $H$  is  given by 
\[  Z(G) = \sum_{\sigma \in \s (G)}   \e{-\beta h(\sigma)},  \]
with  Hamiltonian  
\begin{equation}\label{intro1}
h(\sigma) = - J\sum_{ \{ v_i,v_j \} \in E(G) }    \delta(\sigma_i, \sigma_j)  -H \sum_{v_i\in V(G)}   \delta(1,  \sigma_i).\end{equation}
Here the $v_i$ are the vertices of $G$ and $\{ v_i,v_j \}$ its edges;  the $\sigma_i \in \{1, \ldots , q\}$ are the classical spin variables at $v_i$; $J$ is the spin-spin coupling; $\beta=1/(\kappa T)$; and the sum is over all states $\sigma$ of $G$. (A fuller description of notation can be found in  Section~\ref{s.term}.) 
The Potts model has not only proved to be invaluable to the study of phase transitions and critical phenomena in physics (see, for example, \cite{Bax82,Potts,Wu82}), but has also found applications as widely varied as tumour migration, foam behaviours, gas absorption, and social demographics (see, for example, \cite{Mey03,O+03,SG06,Sch71,Sch05,TS02}). In addition, it admits significant connections with the mathematical areas of graph theory (see, for example, \cite{BE-MPS10,Roy09,Sok05,WM00}) and quantum topology (see, for example, \cite{Jo89,Wu92}).  
Here we are interested in connections between the Potts model and graph theory, particularly with graph polynomials. It is well-known that these connections have been very fruitful, with the extra generalization provided by them leading to significant advances in both combinatorics and physics (for example, \cite{And81,Bax82,CS10,Lieb67,Sok04}).

 Connections between the Potts model and graph polynomials have traditionally focussed on the zero-field Potts model ({\em i.e.}, when $H=0$ in Equation~\eqref{intro1}), whose partition function we denote  by $Z_{zero}$.
For example, 
the classical, and well-known, relationship between the Potts model the Tutte polynomial, $T(G;x,y)$, given by
\begin{equation}\label{e.potts}Z_{zero}(G;q,v) =
q^{k(G)} v^{|V(G)|-k(G)} T(G;(q + v)/v,v + 1),\end{equation}
 where $v = e^{\beta J}-1$,
 requires the absence of an external field.  (See \cite{BE-MPS10,Bol98,Sok05,Wel93} for expositions of this connection.)  Although this relationship  has resulted in valuable interactions between combinatorics and statistical mechanics, (particularly for problems involving computational complexity and the study of the zeros of these polynomials \cite{BE-MPS10,GJ07,Roy09,Sok05,WM00}), there is a need to  extend the theory, and in particular the  deletion-contraction formulation of $Z_{zero}$ provided by its connection with the Tutte polynomial, so as to incorporate the external fields ({\em i.e.} when $H\neq 0$) required by applications of the Potts model to physics and to other areas that require models with external influences.
 
   Only very recently  have external fields been investigated in the context of graph polynomials.  In the special case of the Ising model (which is essentially the Potts model with $q=2$) an interesting new polynomial has been found in \cite{WF}.  This polynomial has a deletion-contraction relation and captures approximating functions, but does not specialize to the Tutte polynomial, nor does the deletion-contraction extend to non-constant magnetic fields. Some weighting strategies have also been developed to study non-zero magnetic fields in a graph colouring framework (see \cite{CS09,CS10,SX10}), with an accompanying graph polynomial.  These papers show that the Potts model partition function with an external field term {\em does not} have a traditional deletion-contraction reduction. It follows that $Z(G)$ can not be written as an evaluation of the Tutte polynomial $T(G)$.  
To overcome this difficulty, in \cite{EMM11}, an extension of the concept of the contraction of an edge of a vertex weighted graph was introduced. (In terms of the Potts model, the vertex weights are used to record contributions of the external field $H$, as described below.) Using this definition of contraction, 
 an extension of the Tutte polynomial, called the $\V$-polynomial was introduced, and it was shown that the Potts model partition function $Z(G)$ with a Hamiltonian given by \eqref{intro1} is an evaluation of the $\V$-polynomial. A number of consequences of this result were described in \cite{EMM11}, including a Fortuin-Kasteleyn type representation for $Z(G)$ (see also \cite{CS09,Sok99,Wu78,Wu82}), a deletion-contraction relation, and computational complexity results. This work unified an important segment of Potts model theory  bringing previously successful combinatorial machinery, including
complexity results, to bear on a wider range of statistical mechanics models.

Here we exploit the combinatorial machinery provided by this new formulation of $Z(G)$ as a graph polynomial $\V$ with a deletion-contraction-type relation to obtain a number of new expansions for $Z(G)$. 
Below, we will in fact use a slightly more general (but still standard) Hamiltonian than that exhibited in Equation~\eqref{intro1}. (We favour the more familiar \eqref{intro1} in the introduction for expositional purposes.) For our results, we  allow variable  spin-spin coupling and site-dependent external fields. This extra generality is to allow for further applications of our results, such as to the random field Ising model.  Below we  use the Hamiltonian from Equation~\eqref{e.ham2}, and we will denote the corresponding partition function by $Z_{ext}(G)$, noting that it includes $Z(G)$ as a specialization.   

We expand  the Potts model partition function in an external field, $Z_{ext}(G)$, in terms of the zero-field Potts partition function, $Z_{zero}(G)$ (see Theorem~\ref{t.exp2}). 
It is well-known that the Tutte polynomial, $T(G;x,y)$,  can  be written as a sum over spanning trees. This spanning tree expansion of the Tutte polynomial gives rise to a spanning tree expansion of $Z_{zero}(G)$. We extend this result, in Theorem~\ref{t.exp3}, to a spanning forest expansion and a spanning tree expansion for the Potts model partition function $Z(G)$ external field.
Furthermore, we show, in Theorem~\ref{t.exp1}, that  $Z_{ext}(G)$ admits the spanning tree expansion.  
This spanning tree expansion for $Z_{ext}(G)$ extends the spanning tree expansion for the zero-field Potts model partition function $Z_{zero}(G)$ that arises through its connection with the Tutte polynomial shown in Equation~\eqref{e.potts}. We conclude by specializing our results to several other common types of Hamiltonian from the physics literature. In addition to these results, a secondary purpose of this paper is to introduce the new tool of the $\V$-polynomial to the physics literature.

This paper is structured as follows: our notation is described in Section~\ref{s.term}. Full statements of our main results on $Z_{ext}(G)$ are given in Section~\ref{s.exp}, with their proofs deferred until Sections~\ref{s.trees} and \ref{s.for}. The main tool used here is the realization of $Z_{ext}(G)$ as an evaluation of the  $\V$-polynomial, which is described in Section~\ref{s.v}.

This work is based on L.M.'s Masters Thesis  which was supervised by I.M.

\section{Notation and terminology}\label{s.term}

\subsection{The $q$-state Potts model}

We begin by recalling some basic information about the Potts model.  
Let $G$ be a graph and consider a set $\{1,2,\ldots, q\}$ of $q$ elements, called \emph{spins}.  
A \emph{state} of a graph $G$ is an assignment of a single spin to each vertex
of the graph. 
Thus, if the vertex set of $G$ is $V(G)=\{v_1, \ldots, v_n\}$, then a {\em state} of $G$ is a function  $\sigma: V(G)\rightarrow \{1,\ldots , q\} $. We let $\s(G)$ denote the set of states of $G$.

The interaction energy may be thought of simply as  weights on the edges of the graph. In physics applications, the interaction energies are typically real numbers.
However, in our context of graph polynomials, they may simply be taken to be independent commuting variables. We will denote the interaction energy on an edge $e=\{v_i, v_j\}$ by $J_e =J_{i,j} = J_{v_i,v_j}$.
 
The \emph{zero field Hamiltonian} is

\begin{equation} \label{z.ham}
h(\sigma ) =  - \sum\limits_{\{i,j\}  \in E(G)} J_{i,j} {\delta(\sigma _i ,\sigma _j)},
\end{equation}
where $\sigma$ is a state of a graph $G$, where $\sigma_i $ is the spin at vertex
$v_i$,  where $\delta $ is the Kronecker delta function, and each edge $\{i,j\}$ is assigned an interaction energy $J_{i,j} $.

Here we will use a common generalization (see, for example, \cite{ACCN88,BBCK00,Sok99,SX10,Wu82}), of the Hamiltonian shown in Equation~\eqref{intro1}. 
\begin{definition} \label{general Hamiltonian}
Let $G$ be a graph.  Assign to each edge $e=\{i,j\}$ an interaction energy  $J_e=J_{i,j} $, and assign to each vertex $v_i$ a  {\em field vector} $ \boldsymbol{M}_i :=(  M_{i,1}, M_{i,2}, \ldots , M_{i,q}  )$. 
Then the  \emph{Hamiltonian of  the Potts model with variable edge interaction energy and variable  field} is 
\begin{equation}\label{e.ham2}  
h(\sigma) =  -\sum_{ \{ i,j \} \in E }  J_{i,j} \delta(\sigma_i, \sigma_j)  - \sum_{v_i\in V(G)}    \sum_{\alpha =1}^q M_{i,\alpha}  \delta(\alpha,  \sigma_i).
\end{equation}
\end{definition}
Note that in this model  the external field contributes   $M_{i,\sigma_i}\in \mathbb{C}$ to the Hamiltonian  for each vertex $v_i$ with spin $\sigma_i$. The field vectors just record the possible contributions $M_{i,j}$ at each vertex.  (The formal tool of field vectors is used here in order to naturally describe the deletion-contraction relation for the Potts model partition function in an external field.)

Note that the Hamiltonian of Equation \eqref{z.ham} may be recovered from the general Hamiltonian of Equation \eqref{e.ham2} by  taking all the  field vectors to be zero.  The Hamiltonian also subsumes other frequently studied forms of the Hamiltonian from the literature, including \eqref{intro1}. We refer the reader to \cite{EMM11} for details.

Regardless of the choice of Hamiltonian, the Potts model partition function is the sum over all possible states of an exponential function of the Hamiltonian:
\begin{definition}  Given a set of
$q$ spins and a Hamiltonian $h$,  the \emph{$q$-state Potts
model partition function} of a graph $G$ is
\begin{equation*}
Z(G) = \sum_{\sigma \in \s (G)}   \e{-\beta h(\sigma)}.
\label{Z}
\end{equation*}
Furthermore, we let 
\begin{itemize}
\item $Z_{zero}(G)$ denote the Potts model partition function with the Hamiltonian of Equation~\eqref{z.ham};

\item  $Z_{ext}(G)$ denote the Potts model partition function with the Hamiltonian of Equation~\eqref{e.ham2}.
\end{itemize}
\end{definition}

\subsection{Vertex-weighted graphs}

We use standard notation for graphs. $E(G)$ and $V(G)$ denote the edge set and vertex set, respectively, of a graph $G$.  If $A \subseteq E(G)$, then $n(A)$, $r(A)$, and $k(A)$ are, respectively, the nullity, rank, and number of components of the {\em spanning subgraph}, $(V(G),A)$, of $G$ with edge set $A$.   

Here, a {\em vertex-weighted graph} consists of a graph $G$, with vertex set $V(G)=\{v_1, v_2, \ldots , v_n\}$ equipped with a  weight function $\omega$ mapping $V(G)$ into a  commutative semigroup. The {\em weight} of the vertex $v_i$ is the value $\omega (v_i)$. We will often write $\omega_i$ rather than $\omega (v_i)$, and $\boldsymbol{x}$ for an indexed set of variables. 
An {\em edge-weighted} graph $G$ is a graph equipped with a mapping $\gamma$ from its edge set  $E(G)$ to a set $\boldsymbol{\gamma}:=\{\gamma_e\}_{e\in E(G)}$.  We use the standard convention  $\gamma:e\mapsto \gamma_e$, for $e\in E(G)$. In addition, at times we  will  write ``$\gamma_i$'' instead of ``$\gamma_{e_i}$''.  In this paper, we assume that all graphs are edge-weighted and vertex-weighted.

If $G$ is a vertex-weighted graph with weight function $\omega$, and $e$ is an edge of $G$, then $G-e$ is the graph obtained from $G$ by deleting the edge $e$ and leaving the weight function unchanged. If $e$ is any non-loop edge of $G$, then $G/e$ is the graph obtained from $G$ by contracting the edge $e$ and changing  the vertex weight function as follows: if $v_i$ and $v_j$ are the vertices incident to $e$, and $v$ is the vertex of $G/e$ created by the contraction, then $ \omega(v) =  \omega(v_i) +\omega(v_j) $. See Figure~\ref{f.dc}.  Loops are not contracted. 

\begin{figure}
\[\begin{array}{ccccc}
\includegraphics[width=2.5cm]{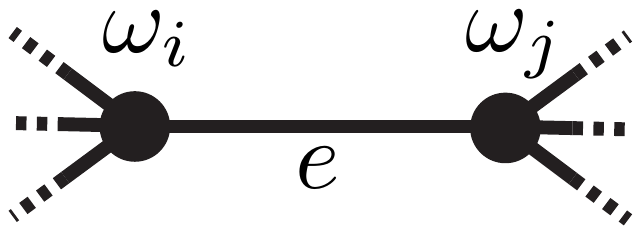} & \hspace{1cm}&\includegraphics[width=2.5cm]{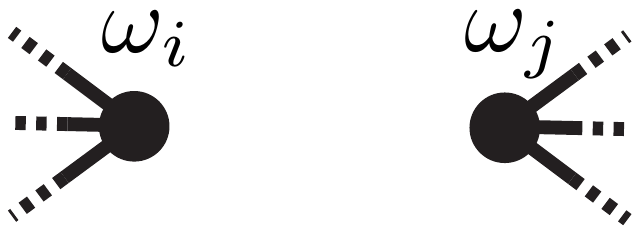} &\hspace{1cm}&\includegraphics[width=2.5cm]{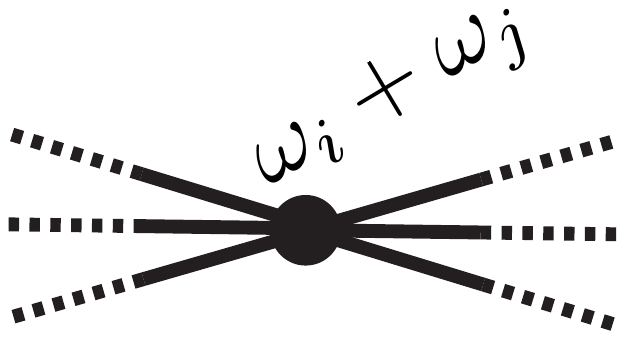}  \\
G && G-e && G/e
\end{array}
\]
\caption{Deletion and contraction of an edge  in a vertex-weighted graph.}
\label{f.dc}
\end{figure}

\section{Expansions for the Potts partition function}\label{s.exp}

\subsection{ Relating $Z_{ext}$ and $Z_{zero}$ via connected partitions}
Let $G$ be a vertex-weighted graph with vertex set $V(G)$, and  let $\pi=(V_1, \ldots V_k)$ be a partition of $V(G)$.  Each vertex set $V_i$ induces a subgraph $G_i$ of $G$. (The subgraph $G_i$ consists of the vertex set $V_i$ and all of the edges of $G$ for which both ends lie in $V_i$.) 
We say that a partition $\pi$ of $G$ is a {\em connected partition} if each induced subgraph $G_i$, for $i=1,\ldots ,k$, is connected. We  let $\p(G)$   denote the set of connected partitions of $V(G)$. We let $\C(\pi)$ denote the set of subgraphs $\{G_1, \ldots , G_k\}$ induced by a connected partition $\pi$.

Finally, we let $[x^i]$ denote the coefficient operator, so that if $P=\sum_{j=1}^k a_j x^j$, then $[x^i]P=a_i$.

We can now state the first of our main results. This result expresses the  Potts model partition function with site dependent external fields, in terms of the zero field Potts model partition function.
\begin{theorem}\label{t.exp2} 
Let $G$ be a graph equipped with a  field vector  $\boldsymbol{M}_i=(M_{i,1}, \ldots , M_{i,q})\in \mathbb{C}^q$ at each vertex $v_i$. Then the Potts model partition function admits  the following expansion:
\[Z_{ext}(G)= \sum_{\pi\in \p(G)} 
X(\pi)
\prod_{H\in\C(\pi)} [q^1]\left(Z_{zero}(H;q, \{\e{\beta J_e} -1  \}_{e\in E(H)} )\right),
\]
where, for each connected partition $\pi$, 
$\C(\pi)$ is the set of subgraphs of $G$ induced by $\pi$;
  $X(\pi) = X_{\boldsymbol{V}_1} \cdots X_{\boldsymbol{V}_k}$ for   $X_{\boldsymbol{M}}=      \sum_{\alpha=1}^q \e{\beta M_{\alpha}}$, and 
$\boldsymbol{V}_i$ is the sum of the weights, $\boldsymbol{M}_i$, of all of the vertices  in $V_i$. 
\end{theorem}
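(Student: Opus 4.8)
The plan is to start from the definition of $Z_{ext}(G)$ as a state sum and reorganize the sum by grouping states according to the partition of $V(G)$ they induce. Recall that a state $\sigma\colon V(G)\to\{1,\dots,q\}$ determines a partition of $V(G)$ into its colour classes, but not every such colour-class partition is connected in $G$; to get a \emph{connected} partition one must refine by taking the components of each monochromatic induced subgraph. So the first step is: for a state $\sigma$, let $\pi(\sigma)\in\p(G)$ be the partition whose blocks are the connected components of the subgraphs induced by the colour classes of $\sigma$. Every block of $\pi(\sigma)$ is monochromatic and induces a connected subgraph, and conversely every $\pi\in\p(G)$ arises from some states. Grouping the state sum over the fibres of the map $\sigma\mapsto\pi(\sigma)$ gives
\[
Z_{ext}(G)=\sum_{\pi\in\p(G)}\ \sum_{\substack{\sigma:\ \pi(\sigma)=\pi}} \e{-\beta h(\sigma)}.
\]

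The second step is to factor the inner sum across the blocks of $\pi$. Write $\pi=(V_1,\dots,V_k)$ with induced subgraphs $G_1,\dots,G_k$. Using the Hamiltonian \eqref{e.ham2}, the edge term splits as $\sum_i \sum_{e\in E(G_i)} J_e\delta(\sigma_i,\sigma_j)$ (edges between distinct blocks contribute nothing since those endpoints can have any colours — but here we must be careful, and this is the main obstacle: see below), and the vertex term splits as $\sum_i \sum_{v_\ell\in V_i}\sum_\alpha M_{\ell,\alpha}\delta(\alpha,\sigma_\ell)$. On a block $V_i$, the states compatible with $\pi(\sigma)=\pi$ are exactly the constant colourings of $V_i$ (all $q$ choices), so the vertex contribution of block $V_i$ sums to $\sum_{\alpha=1}^q \e{\beta\sum_{v_\ell\in V_i}M_{\ell,\alpha}}=\sum_\alpha\e{\beta (\boldsymbol{V}_i)_\alpha}=X_{\boldsymbol{V}_i}$, which is where the factor $X(\pi)=X_{\boldsymbol{V}_1}\cdots X_{\boldsymbol{V}_k}$ comes from.

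The remaining factor for each block is the sum over the monochromatic states of $G_i$ of $\e{\beta\sum_{e\in E(G_i)}J_e}$, but that is not yet $[q^1]Z_{zero}(G_i)$ — and this is exactly the subtlety. The correct bookkeeping is the standard inclusion–exclusion/Möbius argument relating "colourings whose monochromatic components are \emph{exactly} a given partition" to "colourings that are \emph{constant} on the blocks", the same device that produces the connected-subgraph (rather than all-subgraph) expansion in the Fortuin–Kasteleyn representation. Concretely, one expands $\e{\beta J_e}=1+(\e{\beta J_e}-1)$ over the edges of $G$, collects monomials by their edge set $A\subseteq E(G)$, and observes that a monochromatic constraint on the components of $(V(G),A)$ is what remains; summing over colourings then yields $q^{k(A)}\prod_{e\in A}(\e{\beta J_e}-1)$, i.e. the subgraph expansion of $Z_{zero}(G;q,\{\e{\beta J_e}-1\})$. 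Passing from the subgraph expansion to connected partitions, the coefficient of the "single-block" term — which is what $[q^1]$ extracts after substituting for each connected piece — is precisely $Z_{zero}$ restricted to connected spanning subgraphs of $G_i$; equivalently $[q^1]\bigl(Z_{zero}(G_i;q,\cdot)\bigr)=\sum_{A\subseteq E(G_i):\,k(A)=1}\prod_{e\in A}(\e{\beta J_e}-1)$. Checking that the product of these block factors, times $X(\pi)$, summed over $\pi\in\p(G)$, reproduces the full state sum is the heart of the argument; it amounts to verifying that the two ways of stratifying a colouring — by its colour classes versus by the edge subsets it "survives" — agree. I would carry this out either directly, by the $\e{\beta J_e}=1+(\e{\beta J_e}-1)$ expansion followed by regrouping over connected partitions, or by invoking the $\V$-polynomial formula for $Z_{ext}$ from Section~\ref{s.v} and specializing; the $\V$-polynomial route is likely cleaner, since the connected-partition expansion is built into it. The main obstacle, in either route, is the combinatorial identity matching the $[q^1]$-coefficients of $Z_{zero}$ on the blocks with the constraint "$\pi(\sigma)=\pi$", and getting the interaction between different blocks to genuinely contribute nothing — which is where one must check that no edge of $G$ between distinct blocks has been double-counted or dropped.
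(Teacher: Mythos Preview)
Your second route---expand $e^{\beta J_e\delta}=1+(e^{\beta J_e}-1)\delta$, sum over $A\subseteq E(G)$, then group the resulting subgraph expansion by the connected partition $\Pi(A)$---is exactly what the paper does. The paper packages the first step as Theorem~\ref{ZV2} together with the spanning-subgraph formula for $\V$ (Theorem~\ref{t.vsum}), and the regrouping step as Theorem~\ref{statesumV}; the identification of the block factor with $[q^1]Z_{zero}$ then comes from Equation~\eqref{e.mvtp}. So your proposal is correct and aligned with the paper's argument.

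Two clarifications are worth making. First, your initial route via $\pi(\sigma)$ (the partition into monochromatic connected pieces) is \emph{not} the paper's approach, and the obstacle you flag there is real: states with $\pi(\sigma)=\pi$ are proper colourings of the quotient graph $G/\pi$, so the block contributions are coupled and do not factor as $X(\pi)$ without a M\"obius inversion over the partition lattice. The paper avoids this entirely by working on the subgraph side. Second, in the FK/$\V$ route the ``main obstacle'' you worry about---edges between distinct blocks---simply does not arise: if $\Pi(A)=\pi$ then every edge of $A$ lies inside some block, so cross-block edges are automatically absent from $A$ and contribute the factor $1$. The inner sum then factors cleanly as $\prod_i \sum_{A_i\subseteq E(G_i),\,(V_i,A_i)\text{ connected}}\prod_{e\in A_i}\gamma_e=\prod_i [q^1]Z_T(G_i;q,\boldsymbol{\gamma})$, with no double-counting to check.
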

We defer the proof of this theorem until Section~\ref{s.for}.

\subsection{Spanning tree and spanning forest expansions for $Z_{ext}$}\label{ss.stsf}
Our other two expansions for $Z_{ext}$ require some additional notation on trees, forests and edge activity.

A {\em spanning forest} of $G$  is an acyclic spanning subgraph. We let  $\mathcal{F}(G)$ denote the set of spanning forests of $G$.
If $G$ is a connected graph then a {\em spanning tree} of $G$ is   a connected spanning forest. Following a standard abuse of notation from the theory of the Tutte polynomial, if $G$ is a graph with components $G_1, \ldots ,G_{k(G)}$, then we say that $T$ is a {\em spanning tree} of $G$ if  the restriction of $T$ to each component of $G$ is a spanning tree of that component, {\em i.e.}  $T\cap G_i$ is a spanning tree of $G_i$ for each $i=1, \ldots , k(G)$.  We let  $\mathcal{T}(G)$ denote the set of spanning trees of $G$.

Suppose  $T\in\mathcal{T}(G)$ is a spanning tree of $G$. 
If $e\in T$, then the {\em cut} defined by $e$ is the set $\{ f\in E(G) | (T-e)\cup f \in \mathcal{T}(G)  \}$; and if $e\notin T$ the {\em cycle defined by $e$} is the unique cycle in $T\cup e$.   
If, in addition, the edges of $G$ have been given an ordering $\{e_1, \ldots , e_m\}$, then  edge $e\in T$ is {\em internally active} if it is the smallest edge in the cut defined by $e$, and is {\em internally inactive} otherwise; and an edge  $e\notin T$ is {\em externally active} if it is the smallest edge in the cycle defined by $e$, and is {\em externally inactive} otherwise.  Activities are defined analogously when  $F\in \mathcal{F}(G)$ is a spanning forrest of $G$ (simply replace ``$\mathcal{T}(G)$'' by ``$\mathcal{F}(G)$'', and ``tree'' by ``forest'' in the above).
We will let $\IA(G,T)$, $\II(G,T)$, $\EA(G,T)$ and $\EI(G,T)$ denote the set  of  internally active, internally inactive, externally active, and externally inactive edges of $G$ with respect to the spanning tree $T$  or spanning forest $T$. When the graph $G$ is clear from context,  we will sometimes write $\II(T)$ instead of $\II(G,T)$, and do similarly for the other sets above.
Also note that since $\II(G,T)$ and $\IA(G,T)$ are both  sets of edges of the spanning subgraph $T$, we can regard $\II(G,T)$ and $\IA(G,T)$ as  sets of edges of $T$ itself. This allows us to write expressions such as $T/\II(G,T)$.

Our second main result is a spanning forest expansion for the Potts model  partition function with site dependent external fields.
\begin{theorem}\label{t.exp3} 
Let $G$ be a graph equipped with a  field vector  $\boldsymbol{M}_i=(M_{i,1}, \ldots , M_{i,q})\in \mathbb{C}^q$ at each vertex $v_i$. Then the Potts model partition function with Hamiltonian~\eqref{e.ham2} admits the spanning forest expansion 
\[
Z_{ext}(G)=  \sum_{F \in  \F(G) } X(F)   \left(\prod_{e\in F}(\e{\beta J_e} -1)\right)\left(\prod_{e\in \EA(G,F)}\e{\beta J_e}\right),
\]
where 
$X(F) = X_{\boldsymbol{F}_1} \cdots X_{\boldsymbol{F}_k}$, for   $X_{\boldsymbol{M}}=      \sum_{\alpha=1}^q \e{\beta M_{\alpha}}$, and $\boldsymbol{F}_i$ is the sum of the weights, $\boldsymbol{M}_i$, of all of the vertices  in the $i$-th component, $F_i$, of the spanning forest.

\end{theorem}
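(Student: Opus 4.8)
The plan is to route the computation through the subgraph (Fortuin--Kasteleyn) expansion of $Z_{ext}(G)$ and then reorganise the resulting sum over all spanning subgraphs into a sum over spanning forests. Starting from the state sum, the contribution of vertex $v_i$ is $\e{\beta M_{i,\sigma_i}}$, while the edge factor at $e=\{i,j\}$ can be written $1+(\e{\beta J_e}-1)\delta(\sigma_i,\sigma_j)$; expanding the product over $E(G)$, interchanging the order of summation, and using that $\prod_{e\in A}\delta$ forces $\sigma$ to be constant on each component of the spanning subgraph $(V(G),A)$, one obtains
\[
Z_{ext}(G)=\sum_{A\subseteq E(G)} X(A)\prod_{e\in A}(\e{\beta J_e}-1),
\qquad
X(A)=\prod_{P}X_{\boldsymbol{M}(P)},
\]
the last product being over the components $P$ of $(V(G),A)$, with $\boldsymbol{M}(P)=\sum_{v_i\in P}\boldsymbol{M}_i$. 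This is exactly the evaluation of the subgraph expansion of $\V$ recorded in Section~\ref{s.v}, so in practice I would prove it there in the generality of the $\V$-polynomial and simply quote it here.

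The combinatorial heart of the argument is the following partition of the power set. Fix the edge ordering $\{e_1,\dots,e_m\}$ of $G$. I claim
\[
2^{E(G)}=\bigsqcup_{F\in\F(G)}\bigl[\,F,\;F\cup\EA(G,F)\,\bigr],
\]
a disjoint union of Boolean intervals. To prove it, send each $A\subseteq E(G)$ to the spanning forest $F(A)$ obtained by taking, on each component of $(V(G),A)$, the maximum-weight spanning tree with respect to the edge ordering (equivalently, the greedy/Kruskal tree built from the edges of $A$ in decreasing order). By the exchange characterisation of maximum spanning trees, every edge of $A\setminus F(A)$ is the smallest edge of its fundamental cycle with respect to $F(A)$, hence lies in $\EA(G,F(A))$, so $F(A)\subseteq A\subseteq F(A)\cup\EA(G,F(A))$. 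Conversely, if $F\subseteq A\subseteq F\cup\EA(G,F)$ then $A$ and $F$ induce the same partition of $V(G)$ into components (each edge of $\EA(G,F)$ joins two vertices already in a common component of $F$), and on each such component $F$ is the unique spanning tree all of whose off-tree $A$-edges are externally active, i.e. the unique maximum spanning tree; hence $F=F(A)$. This simultaneously shows the intervals cover $2^{E(G)}$ and are pairwise disjoint.

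Now substitute the decomposition into the subgraph expansion. For any $A$ in the interval over $F$ we have $X(A)=X(F)$, since $A$ and $F$ have the same component partition, and $\prod_{e\in A}(\e{\beta J_e}-1)=\bigl(\prod_{e\in F}(\e{\beta J_e}-1)\bigr)\prod_{e\in A\setminus F}(\e{\beta J_e}-1)$. Summing over the subsets $A\setminus F\subseteq\EA(G,F)$ produces the factor
\[
\sum_{B\subseteq\EA(G,F)}\ \prod_{e\in B}(\e{\beta J_e}-1)=\prod_{e\in\EA(G,F)}\bigl(1+(\e{\beta J_e}-1)\bigr)=\prod_{e\in\EA(G,F)}\e{\beta J_e},
\]
and collecting terms over $F\in\F(G)$ yields precisely the stated formula.

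The step I expect to be the main obstacle is the interval decomposition $2^{E(G)}=\bigsqcup_{F}[F,F\cup\EA(G,F)]$: one must be careful about the convention for external activity with respect to a \emph{forest} (a non-forest edge whose ends lie in distinct components of $F$ contributes no cycle and is not externally active), and one must check that $X(\cdot)$ is genuinely constant on each interval — it is this last fact, namely that the extra edges available are precisely those that do not change the component partition, that makes the forest expansion cleaner than the spanning tree expansion. The Fortuin--Kasteleyn expansion and the final binomial-type resummation are routine.
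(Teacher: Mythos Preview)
Your argument is correct. The Fortuin--Kasteleyn expansion you start from is exactly the combination of Theorems~\ref{ZV2} and~\ref{t.vsum}, and your interval decomposition $2^{E(G)}=\bigsqcup_{F\in\F(G)}[F,\,F\cup\EA(G,F)]$ together with the observation that $X(\cdot)$ is constant on each interval gives the result cleanly. Your caution about the convention for external activity with respect to a forest is well placed: an edge joining distinct components of $F$ determines no cycle in $F\cup e$ and must be declared not externally active, which is consistent with how $\EA(G,F)$ arises in the paper (as $\bigcup_{H\in\C(\pi)}\EA(H,T_H)$, a union over the induced blocks).

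The route, however, differs from the paper's. The paper first proves the connected-partition expansion of $\V$ in terms of $[q^1]Z_T$ (Theorem~\ref{statesumV}), then feeds in Traldi's spanning tree expansion of $Z_T$ (Corollary~\ref{c.treeMV}, itself a consequence of the spanning tree expansion of $\V$ in Theorem~\ref{vexp}); extracting $[q^1]$ collapses the internal-activity factors $(q+\gamma_e)$ to $\gamma_e$, and reassembling the per-block spanning trees yields the spanning forest sum (Theorem~\ref{forestV}). Theorem~\ref{t.exp3} then follows from Theorem~\ref{ZV2}. Your approach bypasses the multivariate Tutte polynomial and Traldi's expansion entirely, replacing that machinery with a single combinatorial lemma (the forest-indexed interval partition, a forest analogue of the Crapo decomposition) applied directly to the subgraph sum. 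This is shorter and more self-contained for the purpose of Theorem~\ref{t.exp3} alone; the paper's longer route has the compensating benefit of establishing Theorem~\ref{t.exp2} and the spanning tree expansion (Theorem~\ref{t.exp1}) along the way, and of exhibiting the structural link between $\V$ and $Z_T$.
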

The proof of this theorem appears in Section~\ref{s.for}.

Our final main result is a spanning tree expansion for the Potts model  partition function with site dependent external fields.  
It is well-known that the Tutte polynomial can be defined as a sum over spanning trees (see, for example, the exposition in \cite{Bol98}). Using this spanning tree expansion for the Tutte polynomial together with Equation~\eqref{e.potts}, one can obtain a spanning tree expansion for zero-field Potts model partition function with constant edge interactions:
\begin{multline}\label{e.zsp}  Z_{zero}(G;q,e^{\beta J}-1)   =  \\
q^{k(G)} (e^{\beta J}-1)^{|V(G)|-k(G)} 
\sum_{T \in \T(G) }   \left(    \frac{q + e^{\beta J}-1}{e^{\beta J}-1}  \right)^{| \IA(T) |}  \left(e^{\beta J}\right)^{| \EA(T) |} .  \end{multline}
(Note that this spanning tree expansion can be extended to a spanning tree expansion for the zero-field Potts model partition function with variable edge interactions using Equation~\eqref{e.mvtp} and Corollary~\ref{c.treeMV} below.) Here we extend the expansion shown in Equation~\eqref{e.zsp} to $Z_{ext}(G)$:
\begin{theorem}\label{t.exp1}
Let $G$ be a graph equipped with a  field vector  \mbox{$\boldsymbol{M}_i=(M_{i,1}, \ldots , M_{i,q})\in \mathbb{C}^q$} at each vertex $v_i$. Then the Potts model partition function admits the spanning tree expansion
\begin{equation}\label{e.exp1}  Z_{ext}(G)=  \sum_{T \in \T(G) }   \left( \prod_{e\in \II(G,T)}  ( \e{\beta J_{e}}-1) \right) 
 \left(  \prod_{e\in \EA(G,T)}  \e{\beta J_{e}}  \right) 
  Z_{ext}(   T/ \II(G,T)  ).    \end{equation}
\end{theorem}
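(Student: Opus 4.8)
The plan is to prove Theorem~\ref{t.exp1} by induction on the number of edges of $G$ that are neither loops nor bridges, using the deletion--contraction relation for $Z_{ext}$ supplied by its realization as an evaluation of the $\V$-polynomial (Section~\ref{s.v}). First I would record the base case: if every edge of $G$ is a loop or a bridge, then $G$ has a unique spanning tree $T$, namely the subgraph consisting of all bridges; the loops are exactly $\EA(G,T)$, the bridges are exactly $\IA(G,T)$, so $\II(G,T)=\emptyset$ and $T/\II(G,T)=G/\IA(G,T)$. Since contracting the bridges of $G$ is, for the partition function with a multiplicative edge weight $\e{\beta J_e}$ on each loop, exactly the effect of the vertex-weight merge together with the loop factors $\e{\beta J_e}$, the right-hand side of \eqref{e.exp1} collapses to $\left(\prod_{e \text{ a loop}} \e{\beta J_e}\right) Z_{ext}(G/\IA(G,T))$, which is $Z_{ext}(G)$. (One should double-check the normalization: a loop $e$ at $v_i$ contributes a global factor that agrees with the $\e{\beta J_e}$ appearing in the external-active product, using $\delta(\sigma_i,\sigma_i)=1$.)

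For the inductive step, pick an edge $e$ of $G$ that is neither a loop nor a bridge and apply the deletion--contraction relation $Z_{ext}(G) = \alpha_e\, Z_{ext}(G-e) + \beta_e\, Z_{ext}(G/e)$ with the appropriate coefficients coming from the $\V$-polynomial (I expect $Z_{ext}(G)=(\e{\beta J_e}-1)Z_{ext}(G/e)+Z_{ext}(G-e)$, in analogy with \eqref{e.zsp}; I would extract the precise coefficients from Section~\ref{s.v}). Both $G-e$ and $G/e$ have fewer non-loop-non-bridge edges than $G$ (deletion of a non-bridge and contraction of a non-loop each reduce this count by at least one), so the inductive hypothesis applies to each. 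The heart of the argument is then the standard bijective bookkeeping, due originally to Tutte and used in every spanning-tree-expansion proof: the spanning trees of $G$ partition into those containing $e$ and those not containing $e$; the former are in bijection with $\T(G/e)$ and the latter with $\T(G-e)$. The key lemma I would isolate is that this bijection respects activities for the edges other than $e$ — i.e., for $T \in \T(G)$ with $e \in T$, an edge $f \neq e$ is internally/externally active in $(G,T)$ if and only if it is so in $(G/e, T/e)$, with the analogous statement for $e \notin T$ and $(G-e,T)$ — provided one uses the induced edge ordering. This is classical but needs the hypothesis that $e$ is the smallest edge; the clean way is to order the edges so that $e$ is the \emph{largest} edge of $G$, which one may do without loss of generality since activities, and hence both sides of \eqref{e.exp1}, are independent of the chosen linear order (a fact that itself may need a one-line justification or citation to \cite{Bol98}).

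With $e$ chosen largest, the bookkeeping becomes: for $T \ni e$, since $e$ is largest it lies in no cut minimally and is internally inactive, so $\II(G,T) = \II(G/e,T/e) \cup \{e\}$ while $\EA(G,T)=\EA(G/e,T/e)$; for $T \not\ni e$, again $e$ is externally inactive, so $\EA(G,T)=\EA(G-e,T)$ and $\II(G,T)=\II(G-e,T)$; and in the first case $T/\II(G,T) = (T/e)/\II(G/e,T/e)$. Substituting the inductive expansions of $Z_{ext}(G/e)$ and $Z_{ext}(G-e)$ and multiplying by the coefficients $(\e{\beta J_e}-1)$ and $1$ respectively, the $(\e{\beta J_e}-1)$ factor is exactly absorbed into the $\prod_{f \in \II(G,T)}(\e{\beta J_f}-1)$ term by the extra edge $e$ in $\II(G,T)$ for trees containing $e$, and the externally-active products match term by term; summing the two families reassembles precisely the right-hand side of \eqref{e.exp1} for $G$. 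The main obstacle I anticipate is not the algebra but verifying the activity-compatibility lemma carefully in the presence of the component convention for disconnected $G$ and isolated vertices created by contraction — in particular making sure the claim ``$e$ largest $\Rightarrow$ $e$ internally inactive when in $T$, externally inactive when not'' holds on the nose, and that $T/\II(G,T)$ is well-defined (no loops contracted) so that $Z_{ext}(T/\II(G,T))$ makes sense; these are routine but are where the proof could go wrong if the conventions of Section~\ref{s.term} are not tracked exactly.
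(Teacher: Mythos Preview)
Your approach—induction via deletion--contraction with activity bookkeeping on the largest edge—is essentially the one the paper uses, though the paper works with the $\V$-polynomial first (Theorem~\ref{vexp}) and then specializes via Theorem~\ref{ZV2}, and it inducts on the total number of edges, handling the ordinary, bridge, and loop cases for the largest edge $e_m$ separately in the inductive step rather than absorbing bridges and loops into a base case.

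There is, however, a real gap in your inductive step as written. You propose to reorder the edges at each stage so that your chosen ordinary edge $e$ becomes the largest, appealing to order-independence of the right-hand side of~\eqref{e.exp1}. But that order-independence is \emph{not} the classical Tutte fact in~\cite{Bol98}: here the summand involves $Z_{ext}(T/\II(G,T))$, which depends on the set $\II(G,T)$ itself and not merely on $|\IA(T)|$ and $|\EA(T)|$. The cleanest proof of order-independence for this expression is as a corollary of the very identity you are proving, so invoking it beforehand is circular. The fix is simple: do not reorder. With the order fixed once and for all, let $e$ be the largest \emph{ordinary} edge. The bookkeeping still goes through, because every edge in the cut defined by a tree edge, and every edge in the fundamental cycle defined by a non-tree edge, is necessarily ordinary (loops cannot lie in such cuts, and bridges cannot lie in such cycles); hence $e$, being the largest ordinary edge, is maximal in its own cut or cycle and therefore inactive, and removing it does not change the smallest element of any other cut or cycle. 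Bridges and loops larger than $e$ have singleton cuts and cycles, so their activities are preserved trivially. With this adjustment your argument is complete. (A minor slip in the base case: $T/\II(G,T)=T$, not $G/\IA(G,T)$; the verification you want is simply $Z_{ext}(G)=\bigl(\prod_{\ell\text{ a loop}}\e{\beta J_\ell}\bigr)\,Z_{ext}(T)$, which you essentially have.)
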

The proof of this theorem appears in Section~\ref{s.trees}.

\section{The $\V$-polynomial and the Potts model}\label{s.v}

\subsection{The Tutte polynomial and the Potts model}
 There are  seminal relations between the Tutte polynomial, its extensions, and the zero-field Potts model partition function $Z_{zero}(G)$.    
 The {\em multivariate Tutte polynomial}, $Z_T(G)$, (see \cite{Sok05} and \cite{Tra89}) of a graph $G$ with edge-weights $\boldsymbol{\gamma}:=\{\gamma_e\}_{e\in E(G)}$ is  defined by
  \[ Z_T(G; \theta, \boldsymbol{\gamma} ) := \sum_{A\subseteq E(G)}  \theta^{k(A)} \prod_{e\in A} \gamma_e.    \]
  It is well known that the Potts model partition function with the zero-field Hamiltonian and variable edge-interaction energies of Equation~\eqref{z.ham} can be recovered from the multivariate Tutte polynomial: 
\begin{equation}\label{e.mvtp}
Z_{zero}(G)=Z_T( G;  q, \{ \e{\beta {J_{e}}} -1 \}_{e\in E(G)}). 
\end{equation}
(See \cite{FK72} for the early stages of this theory and later exposition in, for example, \cite{Bax82,BE-MPS10,Bol98,Wel93,WM00}.) 
Furthermore, if the edge-interaction energies are constant with $J_e=J$, for each edge $e$, then the Potts model partition function can be recovered from the Tutte polynomial as in Equation~\eqref{e.potts}. 
As mentioned in the introduction, this connection has resulted in a remarkable interactions between the areas of combinatorics and statistical mechanics, particularly for computational complexity and the study of the zeros of these polynomials. (See, for example, \cite{BE-MPS10,GJ07,Roy09,Sok05,WM00}  for surveys of these results.) 
The equivalence of the Tutte polynomial and the Potts model partition function assumes the absence of an external field, and $Z_{ext}$ can not be expressed in terms of the (multivariate) Tutte polynomial.
(Some weighting strategies have been developed to study non-zero  fields in a graph colouring framework. See \cite{CS09,CS10,SX10}.)

\subsection{The $\V$-polynomial and the Potts model}
In \cite{EMM11}, Ellis-Monaghan and Moffatt used a definition of contraction that incorporates vertex weights to assimilate a Hamiltonian of the generic form of Equation~\eqref{e.ham2} into the theory of the Tutte-Potts connection.
To do this, a new graph polynomial, called the  $\V$-polynomial, was introduced. The  $\V$-polynomial generalizes Noble and Welsh's $W$-polynomial of \cite{NW99}, and extends the Tutte polynomial by incorporating vertex weights and adapting contraction to accommodate them. It was shown that the variable field Potts model partition function (with its many specializations) is an evaluation of the $\V$-polynomial, and hence is a polynomial with a deletion-contraction reduction and a Fortuin-Kasteleyn type representation.  
\begin{definition}[\cite{EMM11}]\label{mvw}
Let $S$ be a  commutative semigroup, let $G$ be a graph equipped with vertex weights $\boldsymbol{\omega}:=\{ \omega_i \} \subseteq S$  and edge weights $\boldsymbol{\gamma}:= \{\gamma_e\}$, and let $\boldsymbol{x}=\{x_k\}_{k\in S}$ be a set of commuting variables. Then the {\em $\V$-polynomial}, $\V(G) = \V(G, \omega; \boldsymbol{x}, \boldsymbol{\gamma}) \in \mathbb{Z} [ \{\gamma_e\}_{e\in E(G)}, \{x_k\}_{k\in S}  ] $, of the vertex- and edge-weighted graph $G$, 
 is defined recursively by: 
\begin{enumerate}
\item $ \V(G) = \V(G-e)+\gamma_e  \V(G/e)$, if $e$ is a non-loop edge of $G$;
\item $\V(G)=(\gamma_e+1)\V(G-e)$, if $e$ is a loop;
\item $ \V(E_m)= \prod_{i=1}^{m} x_{\omega_i}$, if $E_m$ consists of $m$ isolated vertices of weights $\omega_1, \ldots , \omega_m$.
\end{enumerate}
\end{definition}

We will use the following  state sum expansion for the  $\V$-polynomial later.
\begin{theorem}[\cite{EMM11}]\label{t.vsum}
$\V(G)$ can be represented as a sum over spanning subgraphs:
\[ \V(G)=\sum_{A\subseteq E(G)}   x_{c_1}   x_{c_2}\cdots   x_{c_{k(A)}}   \prod_{e\in A} \gamma_e ,   \]
where $c_i$ is the sum of the weights of all of the vertices in the $i$-th connected component of the spanning subgraph $(V(G), A)$.
\end{theorem}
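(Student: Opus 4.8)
The plan is to argue by induction on $|E(G)|$, showing that the explicit spanning-subgraph sum on the right-hand side obeys the three relations (1)--(3) of Definition~\ref{mvw} that define $\V(G)$. Write $R(G) = \sum_{A\subseteq E(G)} x_{c_1}\cdots x_{c_{k(A)}}\prod_{e\in A}\gamma_e$ for this right-hand side, where $c_i$ is the sum of the vertex weights in the $i$-th component of $(V(G),A)$. Since $R(G)$ is given by an explicit, manifestly order-independent formula, it suffices to check that $R$ satisfies relations (1), (2), and (3); the recursion then forces $\V(G)=R(G)$ for every vertex- and edge-weighted graph. For the base case $G=E_m$, the only spanning subgraph is $A=\emptyset$, each vertex is its own component with $c_i=\omega_i$, so $R(E_m)=\prod_{i=1}^m x_{\omega_i}$, matching relation (3).

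For the inductive step, fix an edge $e$ and split the sum over $A\subseteq E(G)$ according to whether $e\in A$. The terms with $e\notin A$ range over subsets of $E(G)\setminus\{e\}=E(G-e)$; since $G-e$ shares its vertex set with $G$, these terms reproduce $R(G-e)$ verbatim, supplying the deletion half of both recursive cases. If $e$ is a loop at a vertex $v$, then adding or removing $e$ from $A$ changes neither the component structure nor any $c_i$, so the terms with $e\in A$ contribute exactly $\gamma_e\,R(G-e)$, giving $R(G)=(1+\gamma_e)R(G-e)$, which is relation (2).

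The substantive case is that of a non-loop edge $e=\{v_i,v_j\}$. Here I write each $A$ containing $e$ as $A=A'\cup\{e\}$ with $A'\subseteq E(G)\setminus\{e\}$, and identify $A'$ with a subset of $E(G/e)$ under the natural bijection between $E(G)\setminus\{e\}$ and $E(G/e)$. The key point to verify is that this correspondence preserves both the component structure and the component weight sums: because $e$ joins $v_i$ and $v_j$, the component of $(V(G),A)$ containing these two vertices corresponds to the component of $(V(G/e),A')$ containing the contracted vertex, and all remaining components match termwise. Crucially, the vertex-weighted contraction of Definition~\ref{mvw} assigns the merged vertex weight $\omega(v_i)+\omega(v_j)$, so the weight sum $c_\ell$ of each component is identical on the two sides. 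Pulling out the factor $\gamma_e$ contributed by $e\in A$ then yields $\gamma_e\,R(G/e)$, whence $R(G)=R(G-e)+\gamma_e\,R(G/e)$, which is relation (1).

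The main obstacle is precisely this contraction bookkeeping: one must confirm that the defining weight rule $\omega(v)=\omega(v_i)+\omega(v_j)$ is exactly what makes the weight sums $c_\ell$ agree under the bijection (any other weighting would break the correspondence), and one should track edges parallel to $e$, which become loops of $G/e$ but contribute uniformly on both sides. Once relations (1)--(3) are confirmed for $R$, the uniqueness of the value generated by the recursion delivers $\V(G)=R(G)$, completing the proof.
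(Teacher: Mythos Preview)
Your proof is correct. Note, however, that the paper does not itself supply a proof of this theorem: it is quoted from \cite{EMM11} and used as a black box (in the proof of Theorem~\ref{statesumV}). So there is no in-paper argument to compare against. Your approach---verifying that the explicit subgraph sum $R(G)$ satisfies the three defining relations of Definition~\ref{mvw} by splitting on $e\in A$ versus $e\notin A$---is the standard and expected one, and is almost certainly what appears in \cite{EMM11}. The bookkeeping you flag (that the contraction rule $\omega(v)=\omega(v_i)+\omega(v_j)$ is precisely what makes the component weight sums match under the bijection $A\leftrightarrow A'\cup\{e\}$, and that parallel edges becoming loops cause no trouble) is exactly the content of the verification and you have handled it correctly.
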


The main result of \cite{EMM11} was the recovery of $Z_{ext}(G)$ as an evaluation of the  Tutte-type graph polynomial $\V(G)$, extending the classical connection between $T$, $Z_T$ and $Z_{zero}$, and providing  a deletion-contraction reduction for  the partition function $Z_{ext}(G)$.
\begin{theorem}[\cite{EMM11}]\label{ZV2} Let $G$ be a graph equipped with a  field vector  $\boldsymbol{M}_i=(M_{i,1}, \ldots , M_{i,q})\in \mathbb{C}^q$ at each vertex $v_i$, and
let $h(\sigma)$ be the Hamiltonian given in Equation~\eqref{e.ham2}.
Then 
\[  Z_{ext}(G)=\V\left( G,\omega ; \;\{X_{\boldsymbol{M}}\}_{\boldsymbol{M}\in \mathbb{C}^q}    ,\;   \{  \e{\beta J_{i,j}}-1   \}_{\{i,j\}\in E(G)}      \right),\]
where the vertex weights  are given by $\omega (v_i) =\boldsymbol{M}_i$ 
 and, for any $\boldsymbol{M} = (M_1, \ldots , M_q) \in \mathbb{C}^q$, 
$ X_{\boldsymbol{M}}=  \sum_{\alpha=1}^q \e{\beta M_{\alpha}}$.
\end{theorem}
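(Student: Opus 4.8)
The plan is to prove Theorem~\ref{ZV2} by showing that the Potts partition function $Z_{ext}(G)$, viewed as a function of vertex- and edge-weighted graphs, satisfies the same three defining recursions as the $\V$-polynomial under the indicated substitution $x_{\boldsymbol{M}} = X_{\boldsymbol{M}} = \sum_{\alpha=1}^q \e{\beta M_\alpha}$ and $\gamma_{\{i,j\}} = \e{\beta J_{i,j}} - 1$. Since Definition~\ref{mvw} characterizes $\V$ uniquely by deletion--contraction together with its value on edgeless graphs, matching these three identities suffices. First I would set up the bookkeeping: a vertex-weighted graph with weights $\omega(v_i)=\boldsymbol{M}_i\in\mathbb{C}^q$ should be interpreted as a Potts system in which the field contribution of a vertex $v$ carrying weight $\boldsymbol{M}=(M_1,\dots,M_q)$ is $\sum_\alpha M_\alpha\,\delta(\alpha,\sigma_v)$; this makes the semigroup operation (coordinatewise addition of field vectors) precisely the operation that occurs when two vertices are merged by a contraction, which is the whole reason field vectors are introduced.

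Next I would verify the base case (recursion~(3)): if $E_m$ consists of isolated vertices $v_1,\dots,v_m$ with weights $\boldsymbol{M}_i$, then there are no edges, so $h(\sigma)=-\sum_i\sum_\alpha M_{i,\alpha}\delta(\alpha,\sigma_i)$ and the sum over states factors as $Z_{ext}(E_m)=\prod_{i=1}^m\sum_{\alpha=1}^q \e{\beta M_{i,\alpha}}=\prod_{i=1}^m X_{\boldsymbol{M}_i}$, which is exactly $\prod_i x_{\omega_i}$ under the substitution. Then the loop case (recursion~(2)): a loop $e$ at a vertex always contributes a factor $\e{\beta J_e}=\gamma_e+1$ to every term of the state sum regardless of $\sigma$, since $\delta(\sigma_i,\sigma_i)=1$, so $Z_{ext}(G)=(\gamma_e+1)Z_{ext}(G-e)$, and deleting a loop changes nothing else. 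The main work is the non-loop deletion--contraction (recursion~(1)). Here I would write $\e{\beta J_e\delta(\sigma_i,\sigma_j)} = 1 + (\e{\beta J_e}-1)\delta(\sigma_i,\sigma_j) = 1 + \gamma_e\,\delta(\sigma_i,\sigma_j)$, substitute this into the definition of $Z_{ext}(G)$, and split the state sum accordingly. The ``$1$'' term reproduces $Z_{ext}(G-e)$ exactly. The ``$\gamma_e\delta(\sigma_i,\sigma_j)$'' term restricts to states with $\sigma_i=\sigma_j$; such states are in bijection with states of the graph in which $v_i$ and $v_j$ are identified, and the identification is weight-preserving precisely because the combined field contribution at the merged vertex is $\sum_\alpha(M_{i,\alpha}+M_{j,\alpha})\delta(\alpha,\sigma) = \sum_\alpha V_\alpha\,\delta(\alpha,\sigma)$ where $\boldsymbol{V}=\boldsymbol{M}_i+\boldsymbol{M}_j$ is exactly the new weight assigned by the contraction rule $\omega(v)=\omega(v_i)+\omega(v_j)$. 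Hence this term equals $\gamma_e\,Z_{ext}(G/e)$, giving $Z_{ext}(G)=Z_{ext}(G-e)+\gamma_e Z_{ext}(G/e)$ as required.

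I expect the one subtlety — and thus the step deserving the most care — is the contraction case when $e$ is a non-loop edge whose endpoints are joined by parallel edges: after identifying $v_i$ and $v_j$, those parallel edges become loops in $G/e$, and one must check the state-sum/weight argument still goes through (it does, since a state with $\sigma_i=\sigma_j$ automatically assigns the two endpoints of each former parallel edge the same spin, matching the loop's universal contribution in $G/e$). A cleaner alternative, which I would probably use to avoid induction-order headaches entirely, is to bypass the recursion and instead expand $\prod_{\{i,j\}\in E}(1+\gamma_{ij}\delta(\sigma_i,\sigma_j))$ directly: this gives $Z_{ext}(G)=\sum_{A\subseteq E(G)}\big(\prod_{e\in A}\gamma_e\big)\sum_{\sigma}\prod_{\{i,j\}\in A}\delta(\sigma_i,\sigma_j)\cdot\prod_{v_i}\e{\beta\sum_\alpha M_{i,\alpha}\delta(\alpha,\sigma_i)}$. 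The inner sum is nonzero only when $\sigma$ is constant on each connected component of $(V(G),A)$, and summing over the $q^{k(A)}$ such states yields $\prod_{\text{components}}X_{c}$ where $c$ is the sum of the field vectors in that component. This is verbatim the state-sum formula for $\V(G)$ from Theorem~\ref{t.vsum} under the substitution, completing the proof. I would present the recursion-matching argument as the main proof and mention the state-sum route as the quick cross-check, citing that this is essentially the argument of \cite{EMM11}.
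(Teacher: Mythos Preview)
The paper does not contain a proof of Theorem~\ref{ZV2}: it is quoted as a result of \cite{EMM11} and used as a black box throughout, so there is no in-paper argument to compare against. Your proposal is mathematically correct, and in fact both routes you sketch---matching the three defining recursions of Definition~\ref{mvw}, and the direct Fortuin--Kasteleyn expansion that lands on the spanning-subgraph formula of Theorem~\ref{t.vsum}---are exactly the standard arguments one would expect (and are essentially what appears in \cite{EMM11}). The parallel-edge subtlety you flag is handled correctly by your observation that $\sigma_i=\sigma_j$ forces every parallel edge to contribute its full Boltzmann weight, matching the loop contribution in $G/e$; no further care is needed there.
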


We note that Theorem~\ref{ZV2} unifies an important segment of Potts model theory and brings previously successful combinatorial machinery, including complexity results, to bear on a wider range of statistical mechanics models. Furthermore, 
Theorem~\ref{ZV2} gives a Fortuin-Kasteleyn-type representation for $Z_{ext}(G)$ and shows  that the partition function $Z_{ext}(G)$ is in fact a polynomial. See \cite{EMM11} for details.

\section{Spanning trees and the Potts model}\label{s.trees}

In this  section we prove Theorem~\ref{t.exp1}, which gives a spanning tree expansion for the Potts model partition function $Z_{ext}(G)$. This result generalizes the spanning tree expansions for $Z_{zero}$ that come from the Tutte polynomial $T(G)$ and the multivariate Tutte polynomial $Z_T(G)$. The expansion for the partition function $Z_{ext}$ will follow from the spanning tree expansion for the $\V$-polynomial given in the following theorem.

\begin{theorem}\label{vexp}Let $G$ be a graph equipped with vertex weights $\boldsymbol{\omega}:=\{ \omega_i \}$  and edge weights $\boldsymbol{\gamma}:= \{\gamma_e\}$. Then
\[\V(G, \omega;\textit{\textbf{x}}, \boldsymbol{\gamma})=
  \sum_{T \in \T(G) }      \left(  \prod_{e\in \II(G,T)}  \gamma_e  \right)  
   \left(  \prod_{e\in \EA(G,T)}  (\gamma_e+1)  \right)  
  \V  \left(     T/ \II(G,T) \right). 
 \]
 \end{theorem}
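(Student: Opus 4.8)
The plan is to prove Theorem~\ref{vexp} by induction on the number of edges of $G$, using the deletion-contraction recursion of Definition~\ref{mvw}, and mimicking the classical argument that establishes the spanning tree expansion of the Tutte polynomial. The base case is $G=E_m$, where $\T(E_m)$ has a single element $T=E_m$ with $\II(E_m,T)=\EA(E_m,T)=\emptyset$ and $T/\II = E_m$, so both sides equal $\prod_i x_{\omega_i}$. For the inductive step, pick the largest edge $e$ in the ordering of $E(G)$ (working with the largest edge, rather than the smallest, is the standard device that makes activities of $G-e$ and $G/e$ behave predictably).

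First I would handle the two easy structural cases. If $e$ is a loop, then $e$ lies in no spanning tree, so $\T(G)=\T(G-e)$; moreover $e$ forms a cycle by itself, so $e$ is the smallest (indeed only) edge of its cycle and hence externally active with respect to \emph{every} $T\in\T(G)$; since $e$ is largest, removing it changes no other activity, and $T/\II(G,T)=T/\II(G-e,T)$. Thus the right-hand side for $G$ equals $(\gamma_e+1)$ times the right-hand side for $G-e$, matching $\V(G)=(\gamma_e+1)\V(G-e)$. If $e$ is a bridge (coloop), then $e$ is in every spanning tree and forms a cut by itself, so $e$ is internally active (smallest in its cut) for every $T$; being largest, its presence or absence affects no other activity, and $\T(G)\leftrightarrow\T(G-e)$ via $T\mapsto T-e$ with $T/\II(G,T) = (T-e)/\II(G-e,T-e)$ after identifying the contracted vertex weights — here one uses that contracting the bridge $e$ merges its two endpoints' weights, which is exactly what Definition~\ref{mvw} prescribes. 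Then $\V(G)=\V(G-e)+\gamma_e\V(G/e)$; but for a bridge, $\V(G/e)=\V(G-e)$ as weighted graphs (contracting a bridge and deleting a bridge give the same abstract vertex-weighted graph up to the merge), so $\V(G) = (\gamma_e+1)\V(G-e)$, again matching.

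The main case is when $e$ is an ordinary edge (neither loop nor bridge). Apply $\V(G)=\V(G-e)+\gamma_e\V(G/e)$ and the inductive hypothesis to both $\V(G-e)$ and $\V(G/e)$. The crux is a bijective correspondence partitioning $\T(G)$ into those spanning trees \emph{not} containing $e$ and those containing $e$: trees of $G$ avoiding $e$ are exactly trees of $G-e$, and trees of $G$ containing $e$ correspond to trees of $G/e$ via $T\mapsto T/e$ (with the vertex-weight merge at $e$). One must then check that, because $e$ is the \emph{largest} edge, the four activity sets transfer correctly: for $T\not\ni e$, the cuts and cycles in $G-e$ determined by the other edges are unchanged except possibly for the cycle of $e$ itself (irrelevant since $e\notin T$), so $\II(G,T)=\II(G-e,T)$ and $\EA(G-e,T)\subseteq\EA(G,T)$ with $e\in\EA(G,T)$ iff $e$ is smallest in its cycle, which it is not (it's largest) — hence $\EA(G,T)=\EA(G-e,T)$; similarly for $T\ni e$, passing to $T/e$, the edge $e$ is largest in its cut so $e\notin\IA(G,T)$, i.e. $e\in\II(G,T)$ always, and the remaining activities match those computed in $G/e$. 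Then $T/\II(G,T) = (T/e)/\II(G/e, T/e)$ when $e\in T$ (since $e\in\II(G,T)$, contracting $\II$ includes contracting $e$), and $T/\II(G,T) = T/\II(G-e,T)$ when $e\notin T$. Feeding these identifications into the two inductive expansions, the $\V(G-e)$-expansion supplies exactly the $e\notin T$ terms of the desired sum for $G$, while $\gamma_e\cdot\V(G/e)$-expansion supplies the $e\in T$ terms, the extra factor $\gamma_e$ being precisely the $\gamma_e$ coming from $e\in\II(G,T)$ in the product $\prod_{e'\in\II(G,T)}\gamma_{e'}$. Summing gives the claimed formula.

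The step I expect to be the main obstacle is the careful bookkeeping of activities across deletion and contraction — specifically, verifying that choosing $e$ to be the \emph{largest} edge makes every relevant cut/cycle of $G-e$ and $G/e$ agree with the corresponding one in $G$ so that internal/external activity of the \emph{other} edges is preserved, and simultaneously forces $e$ itself into $\EA$ (when $e\notin T$) or into $\II$ (when $e\in T$). This is the standard subtlety in spanning-tree-expansion proofs for Tutte-type polynomials; here it additionally requires tracking that the vertex-weight merges performed by contraction are consistent on both sides, i.e. that $(T/e)/\II(G/e,T/e)$ and $T/\II(G,T)$ are the same vertex-weighted graph (which holds because vertex-weight addition is commutative and associative, so the order in which incident vertices of $\II$-edges are merged does not matter). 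Once this combinatorial correspondence is pinned down, the algebraic identity is immediate.
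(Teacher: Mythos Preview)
Your overall strategy---induction on $|E(G)|$, working with the largest edge, and splitting into loop/bridge/ordinary cases---is exactly the paper's approach, and your loop and ordinary cases are correct and match the paper's argument essentially line for line.

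The bridge case, however, contains a genuine error. You assert that for a bridge $e$, ``$\V(G/e)=\V(G-e)$ as weighted graphs'', and hence $\V(G)=(\gamma_e+1)\V(G-e)$. This is false for the $\V$-polynomial. Deleting a bridge leaves its two endpoints as separate vertices with weights $\omega_i$ and $\omega_j$, contributing a factor $x_{\omega_i}x_{\omega_j}$; contracting it produces a single vertex of weight $\omega_i+\omega_j$, contributing $x_{\omega_i+\omega_j}$. These are different, so $\V(G-e)\neq\V(G/e)$ in general. (The analogous equality \emph{does} hold for the classical Tutte polynomial, which is presumably what you had in mind, but the whole point of the $\V$-polynomial is that the vertex weights break that symmetry.) Relatedly, your claim that $T/\II(G,T)=(T-e)/\II(G-e,T-e)$ is wrong: since the bridge $e$ is internally \emph{active}, $e\notin\II(G,T)$, so $e$ survives in $T/\II(G,T)$ while it is absent from $(T-e)/\II(G-e,T-e)$.

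The fix, which is what the paper does, is to exploit precisely the fact that $e$ survives in $T/\II(G,T)$. Apply the inductive hypothesis separately to $\V(G-e)$ and $\V(G/e)$; since $e$ is the largest edge and a bridge, the activities of all other edges agree in $G$, $G-e$, and $G/e$, so for each $T\in\T(G)$ the coefficient $\alpha(G,T)=\prod_{\II}\gamma\cdot\prod_{\EA}(\gamma+1)$ equals both $\alpha(G-e,T-e)$ and $\alpha(G/e,T/e)$. Moreover $(T-e)/\II(G-e,T-e)=[T/\II(G,T)]-e$ and $(T/e)/\II(G/e,T/e)=[T/\II(G,T)]/e$. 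Pairing the two sums term by term gives
\[
\sum_{T\in\T(G)}\alpha(G,T)\bigl(\V([T/\II(G,T)]-e)+\gamma_e\,\V([T/\II(G,T)]/e)\bigr),
\]
and now the deletion--contraction relation for $\V$, applied to the edge $e$ inside the small graph $T/\II(G,T)$, collapses the bracket to $\V(T/\II(G,T))$.
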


\begin{proof}
For ease of reading, we  set \[\alpha(G,T):=  (\prod_{e\in \II(G, T)}  \gamma_e)(  \prod_{e\in \EA(G,T)}  (\gamma_e+1) ) .\]

We will prove the theorem by induction on the number of edges of $G$. 

The assertion is easily verified when $G$ has no edges.

Now suppose $G$ has ordered edges $\{e_1, \ldots , e_m\}$, $m\geq 1$, and assume the assertion holds for all graphs with fewer than $m$ edges. Note that the order of the edges of  $G$ induces an order of the edges of $G-e_m$ and of $G/e_m$. We will use these induced orders throughout the proof. We will consider three cases: when $e_m$ is ordinary, a bridge, or a loop.

\noindent\underline{Case 1:} If $e_m$ is ordinary, then
\begin{multline}\label{p1}
 \sum_{T \in \T(G) }    \alpha(G,T)\,  \V  \left(   T/ \II(G,T)   \right) \\=  
  \mathop{\sum_{T \in  \T(G) }}_{e_m\notin T}   \alpha(G,T)\,  \V  \left(   T/ \II(G,T)   \right)
+\mathop{\sum_{T \in  \T(G) }}_{e_m\in T}        \alpha(G,T)\,  \V  \left(   T/ \II(G,T)   \right) .
\end{multline}
Focussing on the first term on the right-hand side of \eqref{p1}, we have that $T\in \T(G)$ with $e_m\notin T$ if and only if $T\in \T(G-e_m)$.  Since $e_m$ is not a loop and is the largest edge, $e_m\in \EI(G,T)$. In addition,   
for $1\leq i<m$, we have that  $e_i \in \EA(G,T)$   if and only if $e_i \in \EA(G-e_m,T)$, (this is since  the cycles defined by $e_i$ in $G$ and $G-e_m$, with respect to $T$, are identical). We also have that $e_i \in \II(G,T)$   if and only if $e_i \in \II(G-e_m,T)$, (this is since the ordinary edge $e_m$ is the largest edge and so is never the smallest edge in the cut defined by $e_i$ in $G$ and in $G-e_m$).
When $e_m\notin T$, we then have $\EA(G,T) =   \EA(G-e_m, T)$, and $\II(G,T) =   \II(G-e_m, T)$. Thus $\alpha(G,T)= \alpha(G-e_m, T)$ and $  T/ \II(G,T) =  T/ \II(G-e_m, T)$.

Focussing on the second term on the right-hand side of \eqref{p1}, we have that $T\in \T(G)$ with $e_m\in T$ if and only if $T/e_m\in \T(G/e_m)$.  
Also, since $e_m$ is not a bridge and is the largest edge, $e_m\in \II(G,T)$. In addition,   
for $1\leq i<m$, we have that  $e_i \in \EA(G,T)$   if and only if $e_i \in \EA(G/e_m,T/e_m)$, (this is since  the cycles defined by $e_i$ in $G$ and $G-e_m$, both must contain an edge smaller than $e_m$). Furthermore, we have that $e_i \in \II(G,T)$   if and only if $e_i \in \II(G/e_m,T/e_m)$, (this is since  the cuts defined by $e_i$ in $G$ and $G/e_m$, with respect to the two trees, are identical).
When $e_m\in T$, we then have $\EI(G,T) =   \EI(G/e_m, T/e_m)  $, and $\II(G,T) =   \II(G/e_m, T/e_m)\cup \{e_m\}$. 
Thus $\alpha(G,T)=  \gamma_m \alpha(G/e_m, T/e_m)$ and $  T/ \II(G,T) =  T/( \II(G/e_m, T/e_m)\cup \{e_m\} )  =(T/e_m)/ \II(G/e_m, T/e_m)$.

Using the above observations, we can then write \eqref{p1} as
\begin{multline*}  \sum_{T' \in  \T(G-e_m) }    \alpha(G-e_m,T')\,   \V  \left(    T' /\II(G-e_m, T')  \right) \\
+  \gamma_m \sum_{T'' \in  \T(G/e_m) }    \alpha(G/e_m, T'')\,   \V  \left(     T''/ \II(G/e_m,T'') \right) ,\end{multline*}
which, by the inductive hypothesis, is equal to 
$\V(G-e_m)+ \gamma_m\V(G/e_m) =  \V(G)$, as required.


\noindent\underline{Case 2:} 
If $e_m$ is bridge, then $\V(G)=\V(G-e_m)+\gamma_m \V(G/e_m$), which, by the inductive hypothesis, is equal to 
\begin{multline*}   \sum_{T' \in  \T(G-e_m) }    \alpha(G-e_m, T')\,   \V  \left(     T'/\II(G-e_m,T')   \right) \\
+  \gamma_m \sum_{T'' \in  \T(G/e_m) }    \alpha(G/e_m, T'')\,   \V  \left(     T''/\II(G-e_m,T'')\right)  .\end{multline*}
Since $e_m$ is a bridge, we have that $T$ is a spanning tree of $G$ if and only if $T-e_m$ is a spanning tree of $G-e_m$. Also, $T$ is a spanning tree of $G$ if and only if $T/e_m$ is a spanning tree of $G/e_m$.

Also since $e_m$ is a bridge, it is the unique edge in its cut, and so $e_m$ is not in $\EA(G,T)$ nor in $\II(G,T)$. Furthermore, since $e_m$ is  the largest edge, checking the activity of an edge $e_i$, $1\leq i<m$, in $G$ with respect to $G$ is the same as checking the activity of $e_i$  in $G-e_m$ with respect to $G-e_m$, which is also the same as checking the activity of $e_i$  in $G/e_m$ with respect to $G/e_m$. Thus we have $\EA(G,T)=\EA(G-e_m, T-e_m )=\EA(G/e_m, T/e_m)$ and $\II(G,T)=\II(G-e_m, T-e_m )=\II(G/e_m, T/e_m)$.
 Thus $\alpha(G,T)=  \alpha(G-e_m, T-e_m) = \alpha(G/e_m, T/e_m)$ and $  T/ \II(G,T) =  (T-e_m)/ \II(G-e_m, T-e_m)=  (T/e_m)/ \II(G/e_m, T/e_m)$.

Using this, we may write the above sum as
\begin{multline*}    \sum_{T \in  \T(G) }   [ \alpha(G-e_m, T-e_m )\,    \V  \left(    (T-e_m)/ \II(G-e_m, T-e_m)\right)  \\
+  \gamma_m  \,    \alpha(G/e_m, T/e_m  )\,  \V  \left(      (T/e_m)/ \II(G/e_m, T/e_m)
\right) ]\\
= \sum_{T \in  \T(G) }     \alpha(G, T )  \, \left(  \V  \left(     [T/ \II(G, T)]-e_m  \right)  \right.
 \left.+  \gamma_m   \,  \V  \left(     [T/ \II(G, T)]/e_m  \right)\right)
,
\end{multline*}
which, via deletion-contraction, is just 
$   \sum_{T \in  \T(G) }       \alpha( G, T )  \,   \V  \left(    T/ \II(G, T) \right)$,   
as required.


\noindent\underline{Case 3:} Finally, if $e_m$ is a loop, then 
$\V(G)=(\gamma_m+1)\V(G-e_m)$, which, by the inductive hypothesis, equals
\begin{equation}\label{p4} (\gamma_m+1)  \sum_{T' \in  \T(G-e_m) }     \alpha( G-e_m, T' )  \,    \V  \left(    (G-e_m)/\II(G-e_m,T')  \right) .  \end{equation}
Clearly, $T\in\T(G)$  if and only if $T\in \T(G-e_m)$. 
Since $e_m$ is a loop, it is externally active with respect to every spanning tree $T$ of $G$.  Also, for $1\leq i<m$, the activity of $e_i$ in $G$ with respect to $T$ is identical to the activity of $e_i$ in $G-e_m$ with respect to $T$. 
Thus $\alpha(G,T)=  (\gamma_m+1)\alpha(G-e_m, T-e_m)$ and $  T/ \II(G,T) =  T/ \II(G-e_m, T-e_m)$.

Using this, the expression \eqref{p4}  can be written as   
\[ \sum_{T \in  \T(G) }  \alpha(G,T)  \,   \V  \left(   T/ \II(G,T)   \right) ,\]  
and the result follows.

\end{proof}

It was shown in \cite{EMM11} that the multivariate Tutte polynomial $Z(G)$ can be recovered from the  $\V$-polynomial by setting $x_i=\theta$ for each $i$: 
\begin{equation}\label{vz} 
\V(G, \omega; x_i=\theta , \boldsymbol{\gamma})  =Z_T(G; \theta , \boldsymbol{\gamma}).
\end{equation}
We can use this fact together with Theorem~\ref{vexp} to recover Traldi's spanning tree expansion for the multivariate Tutte polynomial from \cite{Tra89}  (see also \cite{BR99}). We will use this expansion in the proof of Theorem~\ref{forestV}.
\begin{corollary}[Traldi \cite{Tra89}]  \label{c.treeMV} 
Let $G$ be a  graph equipped with edge weights $\boldsymbol{\gamma}:= \{\gamma_e\}$. Then 
\[
Z_T(G;\theta,\boldsymbol{\gamma})=  \theta^{k(G)} \sum_{T\in \T(G)}  \;\;\prod_{e\in \II(G,T)}\gamma_{e} \; \prod_{e\in \IA(G,T)}(\gamma_{e}+\theta)\;\prod_{e\in \EA(G,T)}(\gamma_{e}+1),
  \]
  where $k(G)$ denotes the number of  components of $G$.
\end{corollary}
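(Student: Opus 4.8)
The plan is to derive this directly from Theorem~\ref{vexp} by specializing the vertex variables. Setting $x_k=\theta$ for every $k\in S$ turns the left-hand side of Theorem~\ref{vexp} into $Z_T(G;\theta,\boldsymbol{\gamma})$ by Equation~\eqref{vz}, so it remains to identify, under this specialization, each summand $\left(\prod_{e\in\II(G,T)}\gamma_e\right)\left(\prod_{e\in\EA(G,T)}(\gamma_e+1)\right)\V(T/\II(G,T))$ with the corresponding summand of Traldi's formula. The first two products already have the required shape, so the whole point is to evaluate $\V(T/\II(G,T))$ with all the $x_k$ set equal to $\theta$.

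First I would observe that $T/\II(G,T)$ is a forest: by definition $T$ is a disjoint union of trees (one per component of $G$), the edges of $\II(G,T)$ are non-loop edges of this forest, and contracting non-loop edges of a forest produces a forest with no new loops. Hence $T/\II(G,T)$ has edge set exactly $E(T)\setminus\II(G,T)=\IA(G,T)$, and since contraction does not change the number of connected components, $T/\II(G,T)$ has $k(T)=k(G)$ components.

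Next I would compute $\V$ of an arbitrary forest $F$ under the specialization $x_k=\theta$. Using the state-sum expansion of Theorem~\ref{t.vsum}, $\V(F)|_{x_k=\theta}=\sum_{A\subseteq E(F)}\theta^{k(A)}\prod_{e\in A}\gamma_e$; since $F$ is a forest, $k(A)=|V(F)|-|A|$ for every $A\subseteq E(F)$, and the sum factorizes as $\theta^{|V(F)|-|E(F)|}\prod_{e\in E(F)}(\theta+\gamma_e)=\theta^{k(F)}\prod_{e\in E(F)}(\theta+\gamma_e)$. Applying this with $F=T/\II(G,T)$ gives $\V(T/\II(G,T))|_{x_k=\theta}=\theta^{k(G)}\prod_{e\in\IA(G,T)}(\gamma_e+\theta)$. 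Substituting this back into the specialization of Theorem~\ref{vexp} yields exactly the stated formula.

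There is no real obstacle here; the only point requiring care is the bookkeeping in the second paragraph---that contracting $\II(G,T)$ leaves precisely the internally active edges and does not alter the component count---together with the elementary (but easy to mis-state) evaluation of $\V$ on a forest. As an alternative to invoking Theorem~\ref{t.vsum}, one could evaluate $\V(T/\II(G,T))$ by repeatedly applying the bridge rule $\V(G)=\V(G-e)+\gamma_e\V(G/e)$ of Definition~\ref{mvw} to the $|\IA(G,T)|$ bridges of the forest and then the base case of that definition, but the state-sum route is cleaner.
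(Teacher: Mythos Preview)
Your proof is correct and follows essentially the same route as the paper: specialize Theorem~\ref{vexp} via Equation~\eqref{vz}, observe that $T/\II(G,T)$ is a forest with $k(G)$ components and edge set $\IA(G,T)$, and then evaluate the multivariate Tutte polynomial on that forest. The paper simply asserts the value $Z_T(T/\II(G,T))=\theta^{k(G)}\prod_{e\in\IA(G,T)}(\theta+\gamma_e)$ without further justification, whereas you supply a clean derivation of it using the state-sum of Theorem~\ref{t.vsum}; this extra detail is welcome but does not constitute a different approach.
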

\begin{proof}
By Equation~\eqref{vz} and Theorem~\ref{vexp},
\[ Z_T(G;\theta,\boldsymbol{\gamma})= \sum_{T \in \T(G) }       \prod_{e\in \II(G,T)}  \gamma_e  \;  
     \prod_{e\in \EA(G,T)}  (\gamma_e+1)  \; 
  Z_T  \left(     T/ \II(G,T) ;\theta,\boldsymbol{\gamma}\right).  \]
However,  $T/ \II(G,T) $ is just a forest with $k(G)$ components whose edge set is in bijection with $\IA(G,T)$, so $Z_T  \left(     T/ \II(G,T) \right)= \theta^{k(G)}\prod_{e\in \IA(G,T)}(\theta+\gamma_{e})$, and the result follows.
\end{proof}
We note that the well-known spanning tree expansion for the Tutte polynomial \[T(G;x,y)=\sum_{T \in \T(G) } x^{| \IA(T) |}y^{|\EA(T)|}\] can easily be recovered from Corollary~\ref{c.treeMV}. In addition,  Theorem~\ref{vexp} gives spanning tree expansions for Noble and Welsh's polynomials $U(G)$ and $W(G)$ from \cite{NW99}. 

\begin{remark}
Observe that the proof of Corollary~\ref{c.treeMV}  indicates why the expression $  \V  \left(     T/ \II(G,T) \right)$ appears in the spanning tree expansion for the $\V$-polynomial, but $  Z_T$   does not appear  in the expansion for the multivariate Tutte polynomial (and similarly for the Tutte polynomial): while the values of the Tutte polynomial and the multivariate Tutte polynomial on trees are easily computable, this is not the case for the $\V$-polynomial.
\end{remark}

\begin{proof}[Proof of Theorem \ref{t.exp1}]
By Theorem \ref{ZV2}, we have  \[Z(G)=\V\left( G,\omega ; \;\{X_{\boldsymbol{M}}\}_{\boldsymbol{M}\in \mathbb{C}^q}    ,\;   \{  \e{\beta J_{e}}-1   \}_{e\in E(G)}      \right).\] The result then follows by applying Theorem~\ref{s.trees}.  

\end{proof}

\section{Spanning forests and the Potts model}\label{s.for}
In this  section we prove the expansions stated in Theorems~\ref{t.exp2} and~\ref{t.exp3}. 
Our approach to both of these theorems is to find a corresponding expansion for the $\V$-polynomial and then to apply Theorem~\ref{ZV2}. Both of the expansions for the $\V$-polynomial and the proof of these expansions are extensions of results due to Noble and Welsh   on the $W$-polynomial in \cite{NW99}. Our first result shows that the polynomial  $\V(G)$ admits an expansion in terms of the multivariate Tutte polynomial $Z_T$.

\begin{theorem}\label{statesumV} 
Let $G$ be a graph equipped with vertex weights $\boldsymbol{\omega}:=\{ \omega_i \}$  and edge weights $\boldsymbol{\gamma}:= \{\gamma_e\}$. Then
\[
\V(G, \omega;\textit{\textbf{x}}, \boldsymbol{\gamma})=\sum_{\pi\in \p(G)}x(\pi)\prod_{H\in \mathcal{C}(\pi)} [q^1] \left( Z_T(H;q,\boldsymbol{\gamma}) \right),
  \]
where $x(\pi):= x_{c_1}\cdots x_{c_k}$, and $c_i$ is the sum of the vertex weights in the $i$-th block of the partition $\pi$.
\end{theorem}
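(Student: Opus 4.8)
The plan is to prove Theorem~\ref{statesumV} by combining the two state-sum expansions already available: the spanning-subgraph expansion of $\V(G)$ from Theorem~\ref{t.vsum}, and the defining expansion of $Z_T$. The starting point is
\[
\V(G, \omega;\textit{\textbf{x}}, \boldsymbol{\gamma})=\sum_{A\subseteq E(G)}   x_{c_1(A)}\cdots x_{c_{k(A)}(A)}   \prod_{e\in A} \gamma_e ,
\]
where $c_i(A)$ is the sum of the vertex weights in the $i$-th component of $(V(G),A)$. The natural move is to regroup the sum over $A\subseteq E(G)$ according to the connected partition $\pi(A)\in\p(G)$ that the spanning subgraph $(V(G),A)$ induces on the vertex set: two vertices lie in the same block of $\pi(A)$ iff they are in the same component of $(V(G),A)$. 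Note that the monomial $x_{c_1(A)}\cdots x_{c_{k(A)}(A)}$ depends on $A$ only through $\pi(A)$, and it equals exactly $x(\pi(A))$ in the notation of the statement, since the $c_i$ are the block-weight sums.

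The key step is then the factorization of the inner sum. For a fixed $\pi=(V_1,\dots,V_k)\in\p(G)$, the edge subsets $A$ with $\pi(A)=\pi$ are precisely those that (i) use no edge of $G$ joining two different blocks, and (ii) restrict, on each block $V_i$, to an edge set $A_i\subseteq E(G_i)$ that is \emph{connected} (spans $G_i$ with one component). Since edges between distinct blocks are simply excluded and the within-block choices are independent across blocks, the weight $\prod_{e\in A}\gamma_e$ factors as $\prod_{i=1}^k \prod_{e\in A_i}\gamma_e$, giving
\[
\sum_{A\subseteq E(G)}\!\!\!\left[ \pi(A)=\pi\right] \prod_{e\in A}\gamma_e
=\prod_{i=1}^{k}\ \sum_{\substack{A_i\subseteq E(G_i)\\ (V_i,A_i)\text{ connected}}}\ \prod_{e\in A_i}\gamma_e .
\]
So it remains to identify each factor with $[q^1]\bigl(Z_T(G_i;q,\boldsymbol{\gamma})\bigr)$. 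By definition $Z_T(G_i;q,\boldsymbol{\gamma})=\sum_{A_i\subseteq E(G_i)} q^{k(A_i)}\prod_{e\in A_i}\gamma_e$, and $k(A_i)=1$ exactly when $(V_i,A_i)$ is connected; hence the coefficient of $q^1$ picks out precisely the connected spanning subsets of $G_i$, which is the factor above. Substituting back and using $x(\pi)=x_{c_1}\cdots x_{c_k}$ yields the claimed formula, with $\C(\pi)=\{G_1,\dots,G_k\}$ the induced subgraphs.

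The main obstacle is really just bookkeeping: one must check carefully that the map $A\mapsto \pi(A)$ is onto $\p(G)$ (any connected partition is realized, e.g.\ by choosing a spanning tree in each block), that the fibers partition $2^{E(G)}$, and that on each fiber the monomial $x_{c_1(A)}\cdots x_{c_{k(A)}(A)}$ is constant and equal to $x(\pi)$ — in particular that the block-weight sums $c_i$ are well defined independently of how the components are indexed, which is harmless because $\boldsymbol{x}$ is a commuting set of variables. There is no analytic or inductive difficulty here; the whole argument is a reindexing of a finite sum, and Theorems~\ref{t.vsum} and the definition of $Z_T$ do all the heavy lifting. (Incidentally, setting $x_k=\theta$ for all $k$ and using $\sum_\pi \theta^{|\pi|}\prod_H[q^1](Z_T(H)) = Z_T(G;\theta,\boldsymbol\gamma)$ recovers Equation~\eqref{vz}, which is a useful consistency check on the formula.)
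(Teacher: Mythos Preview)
Your proposal is correct and follows essentially the same route as the paper: start from the spanning-subgraph expansion of $\V(G)$ in Theorem~\ref{t.vsum}, regroup according to the connected partition $\pi(A)$ induced by $A$, factor the inner sum over the blocks, and identify each factor with $[q^1]Z_T(G_i;q,\boldsymbol{\gamma})$. Your version is actually a bit more explicit about the bookkeeping (surjectivity of $A\mapsto\pi(A)$, constancy of the $x$-monomial on fibers), but the argument is the same.
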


\begin{proof}%
We begin with the state sum expansion of $\V(G)$ from Theorem~\ref{t.vsum}: 
\[
\boldsymbol{V}(G;\boldsymbol{x},\boldsymbol{\gamma})=\sum_{A\subseteq E(G)}x_{c_1}x_{c_2}\cdots x_{c_{k(A)}}\prod_{e\in A}{\gamma_{e}}.\]
Letting $\Pi(A)$ denote the partition of $V(G)$ whose blocks $V_i$ consist of the vertices in  the connected components of the spanning subgraph with edge set  $A$, we have 
\begin{equation}\label{e.pexp1}
\boldsymbol{V}(G;\boldsymbol{x},\boldsymbol{\gamma})=
\sum_{A\subseteq E(G)}x_{c_1}x_{c_2}\cdots x_{c_{k(A)}}\prod_{e\in A}{\gamma_{e}}=\sum_{\pi\in \p(G)}x(\pi)\sum_{A:\Pi(A)=\pi}\;\prod_{e\in A}{\gamma_{e}},
  \end{equation}
  where the sum is over all connected partitions $\pi$ of $V(G)$.

Suppose that the partition $\pi \in \p(G)$ has blocks $V_1, \ldots , V_k$ and the connected components of the subgraph induced by $\pi$ are $G_1, \ldots , G_k$ where $V_i=V(G_i)$.
Then
\begin{multline*}
\sum_{A:\Pi(A)=\pi} \;\prod_{e\in A}{\gamma_{e}}   = 
\mathop{\mathop{\sum_{A_1 \sqcup \cdots \sqcup A_k \subseteq E(G_1) \sqcup \cdots \sqcup E(G_k)}}_{ (V_i,A_i) \text{ connected} } }_{A_i\subseteq E(G_i)} 
\;\;
\prod_{e\in A_1 \sqcup \cdots \sqcup A_k} \gamma_e 
\\  = \prod_{i=1}^k \mathop{\sum_{A_i  \subseteq E(G_i)}}_{ (V_i,A_i) \text{ connected} } \prod_{e\in A_i}\gamma_e \;\; = \prod_{i=1}^k [q^1]Z(G_i;q,\boldsymbol{\gamma})
= \prod_{H\in \C(\pi)} [q^1]Z(H;q,\boldsymbol{\gamma}).
\end{multline*}
Substituting this into Equation~\eqref{e.pexp1}  gives the result.
%
\end{proof}

\begin{proof}[Proof of Theorem~\ref{t.exp2}]
The result follows immediately from Theorems~\ref{ZV2} and~\ref{statesumV}, and Equation~\eqref{e.mvtp}.
\end{proof}


Using the sum over connected partitions from Theorem~\ref{statesumV}, we will obtain an expression for $\V(G)$ as a sum over spanning forests of $G$. We will use this spanning forest expansion to prove Theorem~\ref{t.exp3}.
\begin{theorem}\label{forestV} 
Let $G$ be a graph equipped with vertex weights $\boldsymbol{\omega}:=\{ \omega_i \}$  and edge weights $\boldsymbol{\gamma}:= \{\gamma_e\}$. Then
\[\V(G, \omega;\textit{\textbf{x}}, \boldsymbol{\gamma})=
  \sum_{F \in  \F(G) } x(F)   \; \prod_{e\in E(F)}\gamma_{e}\;\prod_{e\in \EA(G,F)}(1+\gamma_{e})
\]
where $x(F):= x_{c_1}\cdots x_{c_k}$, where $c_i$is the sum of the vertex weights in the $i$-th component of $F$.
\end{theorem}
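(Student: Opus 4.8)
The plan is to prove Theorem~\ref{forestV} by combining the connected-partition expansion of $\V(G)$ from Theorem~\ref{statesumV} with Traldi's spanning-tree expansion for the multivariate Tutte polynomial from Corollary~\ref{c.treeMV}. Starting from
\[
\V(G, \omega;\textit{\textbf{x}}, \boldsymbol{\gamma})=\sum_{\pi\in \p(G)}x(\pi)\prod_{H\in \mathcal{C}(\pi)} [q^1]\left( Z_T(H;q,\boldsymbol{\gamma})\right),
\]
I would first analyse the single factor $[q^1]Z_T(H;q,\boldsymbol{\gamma})$ for a connected graph $H$. By Corollary~\ref{c.treeMV},
\[
Z_T(H;q,\boldsymbol{\gamma})=q^{1}\sum_{T\in\T(H)}\prod_{e\in\II(H,T)}\gamma_e\prod_{e\in\IA(H,T)}(\gamma_e+q)\prod_{e\in\EA(H,T)}(\gamma_e+1),
\]
since $H$ is connected. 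Extracting the coefficient of $q^1$ kills the factor of $q$ out front and forces us to take the $q^0$ term of $\prod_{e\in\IA(H,T)}(\gamma_e+q)$, i.e. $\prod_{e\in\IA(H,T)}\gamma_e$. Hence $[q^1]Z_T(H;q,\boldsymbol{\gamma})=\sum_{T\in\T(H)}\prod_{e\in\II(H,T)\cup\IA(H,T)}\gamma_e\prod_{e\in\EA(H,T)}(\gamma_e+1)=\sum_{T\in\T(H)}\prod_{e\in E(T)}\gamma_e\prod_{e\in\EA(H,T)}(\gamma_e+1)$, using that $E(T)=\II(H,T)\sqcup\IA(H,T)$.

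Next I would reorganise the double sum. A connected partition $\pi$ with induced subgraphs $H_1,\dots,H_k$ together with a choice of spanning tree $T_i$ of each $H_i$ is exactly the same data as a spanning forest $F$ of $G$ together with the connected partition it induces: set $F=T_1\sqcup\cdots\sqcup T_k$, and conversely the components of $F$ recover the blocks of $\pi$ (here one must note that a spanning forest always induces a connected partition, since each of its components is connected). Under this correspondence $x(\pi)=x(F)$ and $\prod_i\prod_{e\in E(T_i)}\gamma_e=\prod_{e\in E(F)}\gamma_e$. Therefore
\[
\V(G)=\sum_{F\in\F(G)}x(F)\prod_{e\in E(F)}\gamma_e\prod_{i=1}^k\prod_{e\in\EA(H_i,F\cap H_i)}(\gamma_e+1),
\]
where $H_i$ is the component of $G$ induced by the $i$-th component of $F$.

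The remaining step, and the one requiring the most care, is to verify that the external-activity factors glue correctly: that $\bigsqcup_i \EA(H_i,F\cap H_i)=\EA(G,F)$. An edge $e\notin F$ lies in some induced subgraph $H_i$ precisely when both its endpoints lie in the same block $V_i$ of the partition, equivalently when $e$ lies on the fundamental cycle of $F\cup e$ inside that component; edges between distinct blocks create no cycle and are never externally active. For an edge $e$ inside $H_i$, the fundamental cycle it determines with respect to $F$ in $G$ coincides with the fundamental cycle it determines with respect to $F\cap H_i$ in $H_i$, so the activity status (being the smallest edge of that cycle, under the inherited edge order) is unchanged. This gives $\EA(G,F)=\bigsqcup_i\EA(H_i,F\cap H_i)$, and substituting this identity yields
\[
\V(G, \omega;\textit{\textbf{x}}, \boldsymbol{\gamma})=\sum_{F\in\F(G)}x(F)\prod_{e\in E(F)}\gamma_e\prod_{e\in\EA(G,F)}(1+\gamma_e),
\]
as claimed. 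I expect the bookkeeping around the induced edge orders — making sure the order on $E(H_i)$ used to define activities in Corollary~\ref{c.treeMV} is the restriction of the fixed order on $E(G)$, so that smallest-edge-in-cycle is computed consistently — to be the only genuinely delicate point; everything else is a reindexing of sums and products.
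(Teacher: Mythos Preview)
Your argument is correct and follows essentially the same route as the paper's own proof: start from Theorem~\ref{statesumV}, apply Corollary~\ref{c.treeMV} to each connected $H$ to rewrite $[q^1]Z_T(H;q,\boldsymbol{\gamma})$ as a sum over spanning trees of $H$, and then reindex the resulting double sum as a single sum over spanning forests of $G$. In fact you are more explicit than the paper at the one nontrivial point, namely the identity $\EA(G,F)=\bigsqcup_i\EA(H_i,F\cap H_i)$ and the observation that edges joining distinct blocks create no cycle and hence contribute no external-activity factor; the paper passes over this silently when collapsing $\prod_H\prod_{e\in\EA(H,T)}(1+\gamma_e)$ to $\prod_{e\in\EA(G,F)}(1+\gamma_e)$.
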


\begin{proof}
By Theorem~\ref{statesumV}, we have
\begin{equation}\label{e.exp31}
\V(G;\textit{\textbf{x}}, \boldsymbol{\gamma})=\sum_{\pi\in \p(G)}x(\pi)
\prod_{H\in \mathcal{C}(\pi)} [q^1] \left( Z_T(H;q,\boldsymbol{\gamma}) \right).
  \end{equation}
 Also, by Corollary~\ref{c.treeMV}, when $H$ is connected, 
\begin{multline*}
[q^1]Z_{T}(H;q,\boldsymbol{\gamma})=[q^1]\left( q \sum_{T\in \T(H)}\;\prod_{e\in \II(H,T)}\gamma_{e}\;\prod_{e\in \IA(H,T)}(q+\gamma_{e}) \;\prod_{e\in \EA(H,T)}(1+\gamma_{e}) \right) \\
\\
= \sum_{T\in \T(H)}\; \prod_{e\in \II(H,T)}\gamma_{e}\;  \prod_{e\in \IA(H,T)}\gamma_{e}\; \prod_{e\in \EA(H,T)}(1+\gamma_{e})
= \sum_{T\in \T(H)}\;\prod_{e\in E(T)}\gamma_{e} \;\prod_{e\in \EA(H,T)}(1+\gamma_{e}).
  \end{multline*}
  Substituting this into Equation~\eqref{e.exp31} gives
\begin{multline*}
  \V(G;\textit{\textbf{x}}, \boldsymbol{\gamma})=
  \sum_{\pi\in \p(G)}x(\pi) 
  \prod_{H\in \mathcal{C}(\pi)} \sum_{T\in \T(H)}\;\prod_{e\in E(T)}\gamma_{e}\; \prod_{e\in \EA(T)}(1+\gamma_{e}) \\=
  \sum_{\pi\in \p(G)}x(\pi)  \mathop{\sum_{F \in  \F(G) }}_{ \Pi(F)=\pi}   
\;\prod_{e\in E(F)}\gamma_{e}\;\prod_{e\in \EA(F)}(1+\gamma_{e})\;
\\=  \sum_{F \in  \F(G) } x(F)   \;\prod_{e\in E(F)}\gamma_{e}\;\prod_{e\in \EA(F)}(1+\gamma_{e}),
\end{multline*}
as required.
\end{proof}

We can now prove the second of our main results.
\begin{proof}[Proof of Theorem~\ref{t.exp3}] 
The theorem follows immediately from Theorems~\ref{ZV2} and~\ref{forestV}.
\end{proof}

\section{Applications to  common models}\label{s.extra}
In this final section we provide some specializations to a few other  common models. 

The first example  is used in models in which  a system with an external field in which one particular spin (the first, without loss of generality) is preferred (see for example the surveys \cite{Ber05,Wu82}).  In the standard applications to magnetism in  models of magnetism in an external field
 $\beta = 1/(\kappa T)$, where $T$ is the temperature of the system, and where $\kappa
= 1.38 \times 10^{-23} $ Joules/Kelvin is the Boltzmann constant. 
Here again, variable interaction energies and  field values are allowed. 
\begin{theorem}\label{t.extra1}
Let $G$ be a graph in which a  complex value $z_i$ is associated to each vertex $v_i$,  let $h$ be the  Hamiltonian
 \begin{equation}\label{e.h71} h(\sigma) = - \sum_{ \{ i,j \} \in E }  J_{i,j}  \delta(\sigma_i, \sigma_j)  - \sum_{v_i\in V(G)} z_i  \delta(1,  \sigma_i),
 \end{equation}
 and let $Z$ denote the Potts model partition function arising from this Hamiltonian.
\begin{enumerate}
\item If, for $z\in \mathbb{C}$, $X_z=\e{\beta z} +q-1$, then  
\[Z(G)= \sum_{\pi\in \p(G)} 
X(\pi)
\prod_{H\in\C(\pi)} [q^1]\left(Z_{zero}(H;q, \{\e{\beta J_e} -1  \}_{e\in E(H)} )\right),
\]
where, for each connected partition $\pi$, 
 $X(\pi)= X_{c_1} \cdots X_{c_k}$, where $c_j$ is the sum of the weights, $z_i$, of the vertices in the $j$-th block of the partition $\pi$, and $\C(\pi)$ is the set of subgraphs of $G$ induced by $\pi$.

\item If, for $z\in \mathbb{C}$, $X_z=\e{\beta z} +q-1$, then 
\[
Z(G)=  \sum_{F \in  \F(G) } X(F)   \left(\prod_{e\in F}(\e{\beta J_e} -1)\right)\left(\prod_{e\in \EA(G,F)}\e{\beta J_e}\right),
\]
where 
$X(F)= X_{c_1} \cdots X_{c_k}$, where $c_j$ is the sum of the weights, $z_i$, of the vertices in the $j$-th component of the spanning forrest F.

\item  $Z$ admits the spanning tree expansion
\[Z(G)=  \sum_{T \in \T(G) }   \left( \prod_{e\in \II(G,T)}  ( \e{\beta J_{e}}-1) \right) 
 \left(  \prod_{e\in \EA(G,T)}  \e{\beta J_{e}}  \right) 
  Z(   T/ \II(G,T)  ). \]

\end{enumerate}

\end{theorem}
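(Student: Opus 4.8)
The plan is to observe that Theorem~\ref{t.extra1} is simply a specialization of the three main results (Theorems~\ref{t.exp2}, \ref{t.exp3}, and \ref{t.exp1}) obtained by choosing a particular family of field vectors. Recall that the general Hamiltonian \eqref{e.ham2} assigns to each vertex $v_i$ a field vector $\boldsymbol{M}_i=(M_{i,1},\dots,M_{i,q})\in\mathbb{C}^q$, with the external-field contribution to $h(\sigma)$ being $-\sum_{v_i} M_{i,\sigma_i}$. So the first step is to match the Hamiltonian \eqref{e.h71} with \eqref{e.ham2}: the term $-\sum_{v_i} z_i\,\delta(1,\sigma_i)$ contributes $-z_i$ precisely when $\sigma_i=1$ and $0$ otherwise, so \eqref{e.h71} is the case of \eqref{e.ham2} in which $M_{i,1}=z_i$ and $M_{i,\alpha}=0$ for $\alpha=2,\dots,q$. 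In other words, we set $\boldsymbol{M}_i=(z_i,0,\dots,0)$ for each $i$.

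The second step is to compute the relevant quantities under this specialization. For a single vertex with field vector $\boldsymbol{M}=(z,0,\dots,0)$ we get $X_{\boldsymbol{M}}=\sum_{\alpha=1}^q \e{\beta M_\alpha}=\e{\beta z}+\underbrace{1+\dots+1}_{q-1}=\e{\beta z}+q-1$, which is exactly the $X_z$ in the statement. More importantly, field vectors of this shape are closed under the vertex-weight addition used in contraction and in forming $\boldsymbol{V}_i$: the sum of $\boldsymbol{M}_i=(z_i,0,\dots,0)$ over the vertices of a block $V_j$ is $\big(\sum_{v_i\in V_j} z_i, 0,\dots,0\big)$, so if we write $c_j:=\sum_{v_i\in V_j} z_i$ then the aggregate field vector of the block is $(c_j,0,\dots,0)$ and hence $X_{\boldsymbol{V}_j}=X_{c_j}=\e{\beta c_j}+q-1$. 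Consequently $X(\pi)=\prod_j X_{c_j}$ and $X(F)=\prod_j X_{c_j}$ with the $c_j$ as described in parts (1) and (2), and similarly the recursive appearance of $Z_{ext}$ on contracted graphs in Theorem~\ref{t.exp1} becomes the $Z$ of the statement because contracting preserves the ``$(\ast,0,\dots,0)$'' shape of every vertex weight.

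The third step is then purely a matter of quoting: parts (1), (2), and (3) follow by substituting $\boldsymbol{M}_i=(z_i,0,\dots,0)$ into Theorems~\ref{t.exp2}, \ref{t.exp3}, and \ref{t.exp1} respectively, using the identifications above. There is essentially no obstacle here: the only thing that needs a word of justification is that the class of Hamiltonians under consideration is stable under the vertex-weight additions coming from contraction (so that the recursive term $Z(T/\II(G,T))$ in part (3) is again a partition function of the same form, and so that the block weights in parts (1) and (2) can be summarized by the scalars $c_j$). Since that stability is immediate from componentwise addition of the field vectors, the proof is a short verification. I would write it as three one-line deductions, prefaced by the remark that \eqref{e.h71} is \eqref{e.ham2} with $\boldsymbol{M}_i=(z_i,0,\dots,0)$ and that $X_{(z,0,\dots,0)}=\e{\beta z}+q-1$.
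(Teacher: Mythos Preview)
Your proposal is correct and follows essentially the same approach as the paper: the paper's proof also notes that the Hamiltonian~\eqref{e.h71} is the specialization of~\eqref{e.ham2} with $\boldsymbol{M}_i=(z_i,0,\dots,0)$, and then invokes Theorems~\ref{t.exp2}, \ref{t.exp3}, and~\ref{t.exp1}. Your explicit check that $X_{(z,0,\dots,0)}=\e{\beta z}+q-1$ and that the ``$(\ast,0,\dots,0)$'' shape is closed under the vertex-weight addition is a welcome elaboration of what the paper summarizes as ``each vector $\boldsymbol{M}_i$ is described by $z_i$ and so we may reindex the expressions using complex numbers.''
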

\begin{proof}
The theorem follows easily from Theorems \ref{t.exp2}, \ref{t.exp3}, and \ref{t.exp1} upon noting that the Hamiltonian~\eqref{e.h71} can be recovered from the Hamiltonian~\eqref{e.ham2} by setting $\boldsymbol{M}_i=(z_i, 0, \ldots, 0)$ for each $i$, and that then each vector $\boldsymbol{M}_i$ is described by $z_i$ and so we may reindex the expressions using complex numbers.
\end{proof}

The following corollary contains the restriction of Theorem~\ref{t.extra1} to models with constant interaction energies and a constant, non-zero  field.
\begin{corollary}
Let $G$ be a graph,  $h$ be the  Hamiltonian
 \begin{equation}\label{e.h72} h(\sigma) = - J\sum_{ \{ i,j \} \in E }    \delta(\sigma_i, \sigma_j)  -H \sum_{v_i\in V(G)}   \delta(1,  \sigma_i),
 \end{equation}
 and let $Z$ denote the Potts model partition function arising from this Hamiltonian.
\begin{enumerate}
\item We have
\[Z(G)=   \sum_{\pi\in \p(G)} 
X(\pi)
\prod_{K\in\C(\pi)} [q^1]\left(Z_{zero}(K;q, \e{\beta J} -1   )\right),
\]
where, for each connected partition $\pi$, 
$X(\pi)=\prod_{i=1}^{k(\pi)} ( \e{H|\pi_i|} +q-1  )$, where $k(\pi)$ is the number of blocks in $\pi$, and $|\pi_i|$ is the size of its $i$-th block.

\item We have
\[
Z(G)=  \sum_{F \in  \F(G) } X(F)   \left(\prod_{e\in F}(\e{\beta J_e} -1)\right)\left(\prod_{e\in \EA(G,F)}\e{\beta J_e}\right),
\]
where, 
$X(F)=\prod_{i=1}^{k(F)} ( \e{H|V(F_i)|} +q-1  )$, where  $F_i$ is the $i$-th component of the spanning forrest $F$.

\item  $Z$ admits the spanning tree expansion
\[Z(G)=  \sum_{T \in \T(G) }   \left( \prod_{e\in \II(G,T)}  ( \e{\beta J_{e}}-1) \right) 
 \left(  \prod_{e\in \EA(G,T)}  \e{\beta J_{e}}  \right) 
  Z(   T/ \II(G,T)  ). \]
\end{enumerate}
\end{corollary}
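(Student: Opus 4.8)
The plan is to obtain this corollary directly as the special case of Theorem~\ref{t.extra1} in which the vertex weights $z_i$ and the edge interaction energies $J_{i,j}$ are all constant. First I would observe that the Hamiltonian~\eqref{e.h72} is exactly~\eqref{e.h71} with $z_i = H$ for every vertex $v_i$ and $J_{i,j}=J$ for every edge, so all three parts will follow from the corresponding parts of Theorem~\ref{t.extra1} once I carry out the two simplifications forced by constancy: the edge factors collapse because every $\e{\beta J_e}-1$ becomes $\e{\beta J}-1$, and, more importantly, the vertex factors $X(\pi)$ and $X(F)$ simplify because the quantity $c_j$ attached to a block or component of size $\ell$ becomes simply $H\ell$.

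For part~(1), I would take the expansion from Theorem~\ref{t.extra1}(1), note that the $j$-th block of $\pi$ has some size $|\pi_j|$, so its associated weight sum is $c_j = H|\pi_j|$, and hence $X_{c_j} = \e{\beta c_j}+q-1 = \e{H|\pi_j|}+q-1$ (absorbing $\beta$ into the constant, matching the statement's convention), giving $X(\pi)=\prod_{i=1}^{k(\pi)}(\e{H|\pi_i|}+q-1)$; simultaneously each $Z_{zero}(K;q,\{\e{\beta J_e}-1\}_{e\in E(K)})$ becomes $Z_{zero}(K;q,\e{\beta J}-1)$ since all edge weights agree. Part~(2) is handled identically, replacing ``block of $\pi$'' by ``component $F_i$ of the spanning forest'' and $|\pi_i|$ by $|V(F_i)|$. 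Part~(3) is immediate: the spanning tree expansion in Theorem~\ref{t.extra1}(3) already involves only the edge quantities $\e{\beta J_e}$ and the recursively-appearing partition function $Z(T/\II(G,T))$, neither of which needs any rewriting, so it transcribes verbatim.

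I do not expect any genuine obstacle here — this is a bookkeeping specialization rather than a new argument. The one point requiring a little care is purely notational: reconciling the ``$\e{\beta z}$'' convention of Theorem~\ref{t.extra1} with the ``$\e{H\ell}$'' convention in the corollary's statement, i.e.\ making explicit that when $z=H$ is constant the sum of weights over a size-$\ell$ block equals $H\ell$ and that the constant $\beta$ (or $-\beta$ sign) has been folded into $H$ exactly as in the passage from~\eqref{intro1} to the introduction's running notation. Once that identification is stated, each of the three parts is a one-line substitution, and I would simply write ``The result follows from Theorem~\ref{t.extra1} by taking all $z_i=H$ and all $J_{i,j}=J$, and noting that the weight sum over a block (respectively, component) of size $\ell$ is then $H\ell$.''
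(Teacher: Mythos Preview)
Your proposal is correct and matches the paper's approach exactly: the paper presents this corollary immediately after Theorem~\ref{t.extra1} as ``the restriction of Theorem~\ref{t.extra1} to models with constant interaction energies and a constant, non-zero field'' and gives no separate proof at all. Your observation that the only work is the bookkeeping identification $c_j = H|\pi_j|$ (respectively $H|V(F_j)|$) is precisely the content, and the notational point you flag about the $\beta$ in $\e{\beta z}$ versus $\e{H\ell}$ is a genuine inconsistency in the paper's own statement rather than a gap in your argument.
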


Our final specialization is to  random field Ising model (RFIM) which is used to study disordered states.  It has a random  field in that the $z_i$'s are randomly chosen local magnetic fields that each affect only a single site.  In \cite{EMM11}, it was shown that the RFIM (which is essentially the $q=2$ Potts model) can be expressed as an evaluation of the $\V$-polynomial. The RFIM is also particularly interesting from the point of view of graph polynomials as  there have been two very recent studies of the Ising model from a graph theoretical perspective.  \cite{AM09} treats the Ising model with constant interaction energies and constant (but non-zero) magnetic field as a graph invariant and explores graph theoretical properties encoded by it.  \cite{WF} on the other hand, creates a new graph polynomial, which has a deletion-contraction reduction for non-loop edges, and which is, up to a pre-factor and change of variables, equivalent to the RFIM. 

The  RFIM takes spin values in $\{-1,1\}$. We will let  $\tau$  denote a state for the RFIM, which is a map $\tau:V(G)\rightarrow \{-1,+1\}$. As usual we set $\tau_i:=\tau(v_i)$. Also we will let $\mathcal{T}(G)$ be the set of states for the Ising model.
\begin{theorem}
Let $G$ be a graph with a vertex weight $z_i\in \mathbb{C}$ associated to each vertex $v_i$.
The RFIM  given by
\[ 
h(\tau) = - J\sum_{ \{ i,j \} \in E }  {\tau_i \tau_j} - \sum_{i \in V(G)}{z_i \tau_i},\quad \text{and} \quad Z(G)=\sum_{\tau\in \mathcal{T}(G)} \e{-\beta h(\tau)}
\]
admits the following expansions.
\begin{enumerate}
\item We have
\[  Z(G)= \e{-\beta \eta(G)}
 \sum_{F \in  \F(G) } x(F)   \;  (\e{2\beta J}-1)^{|E(F)|}\;\prod_{e\in \EA(G,F)}\e{2\beta J},
\]
where for any $z\in \mathbb{C}$, 
$  x_{z}=    \e{2z} + \e{4z} $;  $\eta(G)= J|E(G)| +3 \sum_{i \in V(G)} z_i $,and
$x(F):= x_{c_1}\cdots x_{c_k}$, for $c_j$  the sum of the vertex weights in the $j$-th component of $F$.

\item We have, for $\eta(G)= J|E(G)| +3 \sum_{i \in V(G)} z_i $,
\[Z(G)=   \e{-\beta \eta(G)}   \sum_{T\in\T(G)}   (\e{2\beta J}-1)^{|\II(T)|}\,  \e{2\beta J |\EA(T)| +\beta \eta(T/\II(T)) }  Z(T/\II(T)).  \]
\end{enumerate}

\end{theorem}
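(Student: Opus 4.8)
The plan is to reduce the RFIM to a specialization of the variable-field Potts model with $q=2$ and then invoke Theorems~\ref{t.exp3} and~\ref{t.exp1}. The first step is the standard algebraic manipulation that turns the $\{-1,+1\}$-valued Ising Hamiltonian into a Potts-type Hamiltonian with Kronecker deltas. For the interaction term one uses $\tau_i\tau_j = 2\delta(\tau_i,\tau_j)-1$, so that $-J\sum_{\{i,j\}\in E}\tau_i\tau_j = -2J\sum_{\{i,j\}\in E}\delta(\tau_i,\tau_j) + J|E(G)|$. For the field term one writes $\tau_i$ in terms of $\delta(1,\tau_i)$ and $\delta(-1,\tau_i)$; identifying the two Ising spins $\{-1,+1\}$ with the two Potts spins, one gets $z_i\tau_i = z_i\delta(1,\tau_i) - z_i\delta(-1,\tau_i)$, which I will rewrite in the form $\sum_{\alpha}M_{i,\alpha}\delta(\alpha,\tau_i)$ by choosing the field vector $\boldsymbol{M}_i$ appropriately (for part~(1) the natural normalization produces the constant $\eta(G)=J|E(G)|+3\sum_i z_i$ absorbed as a prefactor, and the field vector whose $X$-value is $\e{2z}+\e{4z}$; I would reverse-engineer the exact entries of $\boldsymbol{M}_i$ from the stated $x_z$). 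Collecting the scalar terms that do not depend on $\tau$ gives the overall factor $\e{-\beta\eta(G)}$, and what remains is exactly $Z_{ext}$ of a graph with edge interaction energies $2J$ (so $\e{\beta\cdot 2J}-1 = \e{2\beta J}-1$) and the chosen field vectors.

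With this reduction in hand, part~(1) is immediate: apply Theorem~\ref{t.exp3} to the reduced $Z_{ext}$. The spanning-forest sum there is $\sum_{F\in\F(G)} X(F)\left(\prod_{e\in F}(\e{2\beta J}-1)\right)\left(\prod_{e\in\EA(G,F)}\e{2\beta J}\right)$; since $|E(F)|$ counts the edges of $F$, the middle product is $(\e{2\beta J}-1)^{|E(F)|}$, and $X(F)$ becomes $x(F)=x_{c_1}\cdots x_{c_k}$ with $x_z=\e{2z}+\e{4z}$ once the field-vector normalization is fixed, $c_j$ being the sum of the vertex weights in the $j$-th component of $F$. Multiplying by the prefactor $\e{-\beta\eta(G)}$ gives the stated formula.

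For part~(2) the plan is to apply Theorem~\ref{t.exp1} to the reduced $Z_{ext}$, obtaining $Z_{ext}(G)=\sum_{T\in\T(G)}\left(\prod_{e\in\II(G,T)}(\e{2\beta J}-1)\right)\left(\prod_{e\in\EA(G,T)}\e{2\beta J}\right)Z_{ext}(T/\II(G,T))$, i.e.\ $(\e{2\beta J}-1)^{|\II(T)|}\,\e{2\beta J|\EA(T)|}\,Z_{ext}(T/\II(T))$. The one subtlety is the recursive term: $T/\II(G,T)$ is a vertex-weighted graph whose vertex weights are the summed field vectors, and I want to re-express $Z_{ext}(T/\II(T))$ back as an RFIM partition function of the same contracted graph. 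Because contraction adds field vectors and the RFIM$\leftrightarrow$Potts dictionary introduces a prefactor depending on $\eta$, tracking how $\eta$ transforms under contraction is the key bookkeeping step: $\eta(T/\II(T)) = J\,|\II(T)| + 3\sum_i z_i$ where the sum is still over $V(G)=V(T/\II(T))$ (contraction does not change total vertex weight, only the number of edges, and $T/\II(T)$ has exactly $|\II(T)|$ edges since its edge set is in bijection with... wait, it has $|\IA(T)|$ edges — I should double-check which index appears, matching it against the $\e{\beta\eta(T/\II(T))}$ in the statement). One then writes $Z_{ext}(T/\II(T)) = \e{\beta\eta(T/\II(T))}Z(T/\II(T))$, substitutes, and factors $\e{-\beta\eta(G)}$ out front, combining the exponentials into the single exponent $2\beta J|\EA(T)| + \beta\eta(T/\II(T))$ as stated.

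The main obstacle is purely organizational rather than conceptual: getting the RFIM-to-Potts translation exactly right so that the prefactor is precisely $\e{-\beta\eta(G)}$ with $\eta(G)=J|E(G)|+3\sum_i z_i$ and the field-vector $X$-value is precisely $\e{2z}+\e{4z}$. This pins down the unique admissible choice of $\boldsymbol{M}_i$ (up to an additive scalar shift, which is exactly what gets absorbed into $\eta$), and every subsequent constant in both parts is then forced. I would do this translation once, carefully, at the start, and then parts~(1) and~(2) are corollaries of Theorems~\ref{t.exp3} and~\ref{t.exp1} respectively, with part~(2) additionally requiring the $\eta$-under-contraction identity above.
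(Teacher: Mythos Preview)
Your proposal is correct and follows essentially the same route as the paper. The paper's proof simply cites Theorem~6.3 of \cite{EMM11} for the identity $Z(G)=\e{-\beta\eta(G)}\V(G,\omega;\{x_z\},\e{2\beta J}-1)$ (which packages the RFIM-to-Potts translation you propose to carry out by hand) and then applies the $\V$-polynomial expansions (Theorems~\ref{forestV} and~\ref{vexp}) rather than their $Z_{ext}$ consequences (Theorems~\ref{t.exp3} and~\ref{t.exp1}); the bookkeeping you flag for part~(2), rewriting $\V(T/\II(T))$ as $\e{\beta\eta(T/\II(T))}Z(T/\II(T))$, is exactly what is needed and is left implicit in the paper.
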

\begin{proof}
By Theorem~6.3 of \cite{EMM11}, 
$ Z(G)= \e{-\beta \eta(G)}  \V\left( G,\omega  ; \;\{x_{z}\}_{z\in \mathbb{C}}    ,\;    \e{2\beta J}-1       \right)
$,
where, for any $z\in \mathbb{C}$, 
$  x_{z}=    \e{2z} + \e{4z} $, and $\omega(v_i)= z_i$. 
The first item follows by applying Theorem~\ref{statesumV}, and the second item follows
by applying Theorem~\ref{vexp}.


\end{proof}




\end{document}